\newcommand{\mb}[1]{\ensuremath{\boldsymbol{#1}}}
\newcommand{\comment}[1]{}
\newtheorem{theorem}{Theorem}
\newtheorem{proposition}[theorem]{Proposition}
\theoremstyle{definition}
\begin{document}

\title{{Multistage Robust Unit Commitment with \\ Dynamic Uncertainty Sets and Energy Storage}}

\author{\'Alvaro~Lorca~and~Xu~Andy~Sun
\thanks{\'A. Lorca and X. A. Sun are with the H. Milton Stewart School of Industrial and Systems Engineering, Georgia Institute of Technology, Atlanta, GA 30332 USA, e-mail: alvarolorca@gatech.edu, andy.sun@isye.gatech.edu.}}

\date{June 2016}


\maketitle

\begin{abstract}
The deep penetration of wind and solar power is a critical component of the future power grid. However, the intermittency and stochasticity of these renewable resources bring significant challenges to the reliable and economic operation of power systems. Motivated by these challenges, we present a multistage adaptive robust optimization model for the unit commitment (UC) problem, which models the sequential nature of the dispatch process and utilizes a new type of dynamic uncertainty sets to capture the temporal and spatial correlations of wind and solar power. The model also considers the operation of energy storage devices. We propose a simplified and effective affine policy for dispatch decisions, and develop an efficient algorithmic framework using a combination of constraint generation and duality based reformulation with various improvements. Extensive computational experiments show that the proposed method can efficiently solve multistage robust UC problems on the Polish 2736-bus system under high dimensional uncertainty of 60 wind farms and 30 solar farms. The computational results also suggest that the proposed model leads to significant benefits in both costs and reliability over robust models with traditional uncertainty sets as well as deterministic models with reserve rules.
\end{abstract}

Keywords: unit commitment, robust optimization, renewable energy, solar power, wind power, energy storage.

\allowdisplaybreaks

\section{Introduction} \label{Section:Introduction}
The reliable and cost-effective operation of power systems with an abundant presence of wind and solar power depends critically on the competence of optimization methods to effectively manage their uncertainty. The most crucial decision process that faces this challenge is the unit commitment (UC) problem, which schedules generating capacities for the next day and prepares the power system for potentially strong variations in the availability of intermittent renewable resources.

Many efforts within the realm of optimization under uncertainty have been developed for the UC problem. Two main types of methods are stochastic programming and robust optimization. Typically, stochastic programming methods involve scenario trees for modeling uncertain parameters, see for example \cite{CheungWatson2015StochasticUC}, \cite{PapavasiliouOren2015HighPerformance}, \cite{Takriti1996Stochastic}, \cite{WangShahidehpour2008}, and references therein. The stochastic programming framework is versatile, however, it may induce substantial computational difficulties for large-scale problems, and it is difficult to properly represent temporal and spatial correlations within scenario trees. Robust optimization instead relies on the concept of uncertainty set, namely, a deterministic set of realizations of uncertain parameters, which leads to simplified models and improved computational tractability. The research on robust UC is growing rapidly, starting with a static robust model for the contingency constrained UC proposed in \cite{Street2011} and the two-stage robust UC models first developed in \cite{BertsimasSun2013Adaptive}, \cite{Jiang2012}, and \cite{Zeng2012robustUC}. In \cite{BertsimasSun2013Adaptive} a security constrained robust UC model is developed under net load uncertainty, \cite{Jiang2012} presents a robust UC model with wind uncertainty and pumped-storage units, and \cite{Zeng2012robustUC} develops a robust UC model with wind uncertainty and demand response. Other two-stage robust UC approaches have considered generator and transmission line contingencies \cite{WangWatson2013}, the combination with stochastic UC \cite{ZhaoGuan2013}, a worst-case regret objective \cite{Jiang2013MinimaxRegret}, and the use of exact and heuristic approaches to solve bilinear subproblems \cite{Jiang2014TwoStage}, \cite{SunLorca2014}, to name a few works. In two-stage robust UC models, generator on/off decisions are selected in the first stage and then dispatch decisions are made in the second stage with the full knowledge of all the uncertain parameters in the future. This assumption on the knowledge of future uncertainty in the two-stage model is unrealistic, because in the real-life dispatch process the operators only know realizations of uncertainty up to the time of the dispatch decision.


A more accurate UC model should restrict operators' actions to only depend on uncertain parameters realized up to the current decision period. That is, the \emph{non-anticipativity} of dispatch decisions needs to be enforced. Such a robust model is called a \emph{multistage} robust UC model. The benefit of such a model over two-stage robust and deterministic UC models is that the multistage model properly prepares the system's ramping capability to meet future demand variations, which is especially important for systems with limited ramping capacity and significant renewable variations \cite{LorcaSunLitvinovZheng2016AffineUC}.


Multistage robust optimization in its most general form is computationally intractable, but the concept of affine policy has been proposed as an effective approximation, where recourse decisions take the form of an affine function of uncertain parameters \cite{BenTal2004}. In order to handle the large scale of the UC problem, simplified affine policies were proposed in \cite{LorcaSunLitvinovZheng2016AffineUC}, as well as a solution method based on constraint generation. Another application of affine policies for UC can be found in a stochastic UC model in \cite{Warrington2015RollingUC}, where the classic approach of duality reformulation is used and computational results are shown for a 2-dimensional uncertainty set and a 118-bus system. In the context of power systems, the affine policy approach has also been applied in a stochastic economic dispatch problem with energy storage decisions \cite{Warrington2013PolicyBasedReserves}, in a robust optimal power flow problem \cite{Jabr2013AdjustableOPF}, and in a chance-constrained optimal power flow problem \cite{Bienstock2014ChanceConstrained}.


A key component of any robust optimization model is the uncertainty set used to represent uncertain parameters \cite{Delage2015Tutorial}. A particularly important challenge is to capture the correlational structure of uncertain parameters. Most of the existing literature, including the above robust optimization references, have considered \emph{static} uncertainty sets where temporal and spatial correlations are not systematically represented or simply ignored. However, some important efforts have been undertaken to try to improve these uncertainty models. In \cite{Chen2007ARobust}, primitive uncertainties with potential asymmetric distributions are considered as underlying factors that determine the uncertain parameters of interest, which can be used to capture dependencies. In \cite{Minoux2014TwoStage}, state-space representable uncertainty sets are considered, which can also be used to capture certain dependencies. In \cite{LorcaSun2015Wind}, the idea of dynamic uncertainty sets is proposed to capture temporal and spatial correlations in wind speeds.

The present paper significantly improves existing work, especially our previous results in \cite{LorcaSunLitvinovZheng2016AffineUC}, in several important directions. In particular, the paper addresses the following research questions that have not been successfully resolved in the existing literature: (i) How to incorporate temporal and spatial dynamics of uncertainty from both wind and solar generation in the multistage robust UC model? (ii) How to efficiently solve the resulting multistage robust UC models for large-scale systems with high dimensional uncertainty? (iii) How to effectively utilize the dispatch policy obtained from the multistage robust UC model in real-time dispatch? (iv) What is the impact of using energy storage in the multistage robust UC model? The contributions of this paper are summarized as follows:
\begin{enumerate}
  \item We propose a new multistage robust UC model with both wind and solar power uncertainty and energy storage, using a simple and effective affine policy. In this model wind and solar power are dispatchable and their availability is the uncertain component.
  \item We also formulate a new robust look-ahead economic dispatch model that utilizes the affine dispatch policy obtained from the multistage robust UC to improve the robustness of real-time operation.
  \item We develop a data-driven approach to construct dynamic uncertainty sets for capturing joint temporal and spatial correlations of multiple wind and solar farms, including a critical enhancement to reduce the dimensionality of these sets. These sets directly model renewable power, rather than rely on explanatory factors such as wind speed or solar irradiance.
  \item We develop an efficient solution method combining constraint generation and the duality based approach with various algorithmic enhancements, including the use of outer approximation techniques for reformulating inter-temporal constraints, the use of a one-tree Benders implementation, and constraint screening speed-up techniques. The proposed algorithm can solve multistage robust UC models for the Polish 2736-bus power system with high-dimensional uncertainty within a couple of hours on a modest personal computer, which offers a practical solution for real-world operations.
  \item Extensive computational experiments on a simulation platform that mimics the hour-to-hour operation of a real-world power system are carried out to show the benefits of the proposed models and algorithms in comparison to other robust and deterministic approaches.
\end{enumerate}

The remainder of the paper is organized as follows. Section \ref{Section:Model} formulates the multistage robust UC model and the dispatch policy, as well as an economic dispatch (ED) method that exploits the robust adaptive dispatch policy. Section \ref{Section:UncertaintySet} proposes dynamic uncertainty sets for modeling wind and solar power. Section \ref{Section:SolutionMethod} develops the solution algorithms. Section \ref{Section:ComputationalExperiments} presents computational experiments. Section \ref{Section:Conclusion} concludes.

\section{Multistage Robust Unit Commitment Model} \label{Section:Model}
\subsection{Fully-Adaptive Model} \label{Section:Model:FullyAdaptiveModel}
We describe here the fully-adaptive multistage robust UC model considered in this paper. The decisions in this model are composed of binary on/off commitment decisions of conventional generators, dispatch decisions of conventional generators and renewable units, and charge/discharge decisions of storage units. The uncertainty considered in this model corresponds to the power availability at renewable units. The goal of the problem is to determine on/off decisions that are decided before the uncertainty is revealed, and a dispatch policy that determines how all other decisions adapt to uncertainty as it is revealed throughout time, with the objective of minimizing the total worst-case cost. The notation is as follows. Let $\mathcal{N}^d, \mathcal{N}^g, \mathcal{N}^l, \mathcal{N}^r, \mathcal{N}^s, \mathcal{T}$ denote the sets of demand nodes, generators, transmission lines, renewable units (wind or solar), storage units, and time periods, respectively. For the different types of units the corresponding notation is described below.
\begin{enumerate}
	\item For \emph{conventional} generator $i$ at time $t$: $x^o_{it}$, $x^+_{it}$ and $x^-_{it}$ are the on/off, start-up, and shut-down decisions, jointly denoted by vector $\mb{x}$; $p^g_{it}(\overline{\mb{p}}^r_{[t]})$ is the adaptive dispatch \emph{policy} of generator output, which is a function of the uncertain available renewable power realized up to time $t$; $\mb{c}$ is a vector encompassing no-load, start-up, and shut-down costs of generators, and $C^g_i$ is the variable cost of generator $i$; $\underline{p}^g_{it}$ and $\overline{p}^g_{it}$ are the minimum and maximum output levels of generator $i$; $RD_{it}$ and $RU_{it}$ are the ramp-down and ramp-up rates of generator $i$, and $SD_{it}$ and $SU_{it}$ are the ramp rates when generator $i$ shuts down or turns on, respectively.
	\item For \emph{renewable} unit $i$ at time $t$: $\overline{p}^r_{it}$ denotes its available power output, which is an uncertain parameter in the robust UC model \eqref{eq:FullyAdaptiveModel};  $\overline{\mathcal{P}}^r$ is the uncertainty set for the uncertain vector $\overline{\mb{p}}^r$ of available renewable output; $p^r_{it}(\cdot)$ is the dispatch policy of renewable unit $i$.
	\item For \emph{storage} unit $i$ at time $t$: $p^{s+}_{it}(\cdot)$ and $p^{s-}_{it}(\cdot)$ are respectively its discharge and charge policies; $\underline{p}^{s+}_{it}$, $\overline{p}^{s+}_{it}$, $\underline{p}^{s-}_{it}$, and $\overline{p}^{s-}_{it}$ are the limits for its power output and input; and $q^{s}_{i0}$, $\overline{q}^{s}_{i}$, and $\eta^s_i$ are its initial storage level, capacity, and efficiency.
\end{enumerate}
Based on the description above, vector $\mb{x}$ encompasses all binary decisions related to generator commitment, and $\mb{p}(\cdot) = (\mb{p}^g(\cdot),\mb{p}^r(\cdot),\mb{p}^s(\cdot))$ corresponds to a dispatch policy of conventional, renewable, and storage units that are functions of the available renewable power $\overline{\mb{p}}^r$. For the transmission system, let $\alpha^d_{lj}$, $\alpha^g_{li}$, $\alpha^r_{li}$, and $\alpha^s_{li}$ denote the corresponding shift factor values for demands, generators, renewable units, and storage units, respectively, for transmission line $l$, based on a DC power flow model, and let $f_l^{max}$ denote the flow limit for line $l$. Finally, let $d_j^t$ denote the demand level of node $j$ at time $t$.

With the above notation, the fully adaptive robust UC model is formulated as follows:
\begin{subequations} \label{eq:FullyAdaptiveModel}
\begin{align}
& \min\limits_{\mb{x},\mb{p}(\cdot)} \;\; \left\{ \mb{c}^\top x \; + \; \max\limits_{\overline{\mb{p}}^r \in \overline{\mathcal{P}}^r} \; \sum_{i \in \mathcal{N}_g} \sum_{t \in \mathcal{T}} C^g_i \, p^g_{it}(\overline{\mb{p}}^r_{[t]}) \right\} \label{eq:FullyAdaptiveModel:Objective} \\
\mbox{s.t.} \;\; & \mb{x} \in X \label{eq:FullyAdaptiveModel:OnOffConstraints} \\
& x^o_{it} \, \underline{p}^g_{it} \leq p^g_{it}(\overline{\mb{p}}^r_{[t]}) \leq x^o_{it} \, \overline{p}^g_{it} \quad \forall \overline{\mb{p}}^r \in \overline{\mathcal{P}}^r, i \in \mathcal{N}^g, t \in \mathcal{T} \label{eq:FullyAdaptiveModel:GeneratorOutputLimits} \\
& -RD_{it} \, x^o_{it} - SD_{it} \, x^-_{it} \leq p^g_{it}(\overline{\mb{p}}^r_{[t]}) - p^g_{i,t-1}(\overline{\mb{p}}^r_{[t-1]}) \leq RU_{it} \, x^o_{i,t-1} + SU_{it} \, x^+_{it} \label{eq:FullyAdaptiveModel:GeneratorRampingLimits} \\
& \hspace{9cm} \forall \overline{\mb{p}}^r \in \overline{\mathcal{P}}^r, i \in \mathcal{N}^g, t \in \mathcal{T} \notag \\
& 0 \leq p^r_{it}(\overline{\mb{p}}^r_{[t]}) \leq \overline{p}^r_{it} \quad \forall \overline{\mb{p}}^r \in \overline{\mathcal{P}}^r, i \in \mathcal{N}^r, \, t \in \mathcal{T} \label{eq:FullyAdaptiveModel:RenewableUnitOutputLimits} \\
& \underline{p}^{s+}_{it} \leq p^{s+}_{it}(\overline{\mb{p}}^r_{[t]}) \leq \overline{p}^{s+}_{it} \quad \forall \overline{\mb{p}}^r \in \overline{\mathcal{P}}^r, i \in \mathcal{N}^s, \, t \in \mathcal{T} \label{eq:FullyAdaptiveModel:StorageUnitOutputLimits} \\
& \underline{p}^{s-}_{it} \leq p^{s-}_{it}(\overline{\mb{p}}^r_{[t]}) \leq \overline{p}^{s-}_{it} \quad \forall \overline{\mb{p}}^r \in \overline{\mathcal{P}}^r, i \in \mathcal{N}^s, \, t \in \mathcal{T} \label{eq:FullyAdaptiveModel:StorageUnitInputLimits} \\
& 0 \leq q^{s}_{i0} + \sum_{\tau \in [1:t]} \left( \eta^s_i \, p^{s-}_{i \tau}(\overline{\mb{p}}^r_{[\tau]}) - p^{s+}_{i \tau}(\overline{\mb{p}}^r_{[\tau]}) \right) \leq \overline{q}^{s}_{i} \quad \forall \overline{\mb{p}}^r \in \overline{\mathcal{P}}^r,\, i \in \mathcal{N}^s, \, t \in \mathcal{T} \label{eq:FullyAdaptiveModel:StorageCapacityLimits} \\
& -f_l^{max} \leq \sum_{i \in \mathcal{N}^g} \alpha^g_{li} \, p^g_{it}(\overline{\mb{p}}^r_{[t]}) + \sum_{i \in \mathcal{N}^r} \alpha^r_{li} \, p^r_{it}(\overline{\mb{p}}^r_{[t]}) + \sum_{i \in \mathcal{N}^s} \alpha^s_{li} \, \left( p^{s+}_{it}(\overline{\mb{p}}^r_{[t]}) - p^{s-}_{it}(\overline{\mb{p}}^r_{[t]}) \right) \label{eq:FullyAdaptiveModel:LineCapacities} \\
& \hspace{5cm} - \sum_{j \in \mathcal{N}^d} \alpha^d_{lj} \, d_{jt}\leq f_l^{max} \quad \forall \overline{\mb{p}}^r \in \overline{\mathcal{P}}^r,\, l \in \mathcal{N}^l, \, t \in \mathcal{T} \notag \\
& \sum_{j \in \mathcal{N}^d} d_{jt} = \sum_{i \in \mathcal{N}^g} \hspace{-1mm} p^g_{it}(\overline{\mb{p}}^r_{[t]}) + \sum_{i \in \mathcal{N}^r} \hspace{-1mm} p^r_{it}(\overline{\mb{p}}^r_{[t]}) + \sum_{i \in \mathcal{N}^s} \hspace{-1mm} \left( p^{s+}_{it}(\overline{\mb{p}}^r_{[t]}) - p^{s-}_{it}(\overline{\mb{p}}^r_{[t]}) \right) \label{eq:FullyAdaptiveModel:EnergyBalance} \\
& \hspace{11cm} \forall \overline{\mb{p}}^r \in \overline{\mathcal{P}}^r, \, t \in \mathcal{T}. \notag
\end{align}
\end{subequations}

In this problem, the objective \eqref{eq:FullyAdaptiveModel:Objective} consists of minimizing the sum of commitment costs (including no-load, start-up, and shut-down costs) and the worst-case dispatch cost, which is assumed to be linear for ease of exposition, but can be replaced with a piecewise linear function without changing the structure of the problem. Notice that the second-stage decision in \eqref{eq:FullyAdaptiveModel} is a policy function. The overall $\min-\max$ structure in \eqref{eq:FullyAdaptiveModel:Objective} can be equivalently written in the more familiar form of nested $\min-\max$ over time periods, where the dispatch decision variables are vectors rather than policy functions. See \cite[Chapter 2]{Shapiro2009StochBook} for more discussions. Eq. \eqref{eq:FullyAdaptiveModel:OnOffConstraints} represents all the commitment constraints, including as start-up and shut-down constraints and minimum up and down times in the set $X$ (see e.g. \cite{Ostrowski2012TightUC} for details on the formulation). Eq. \eqref{eq:FullyAdaptiveModel:GeneratorOutputLimits} enforces output limits when generators are on, and zero output when they are off. Eq. \eqref{eq:FullyAdaptiveModel:GeneratorRampingLimits} is the ramping constraints. Eq. \eqref{eq:FullyAdaptiveModel:RenewableUnitOutputLimits} restricts the dispatch level of a renewable unit to be bounded by the available power of that unit.  Eqs. \eqref{eq:FullyAdaptiveModel:StorageUnitOutputLimits}-\eqref{eq:FullyAdaptiveModel:StorageUnitInputLimits} are discharge and charge limit constraints for storage units. Eq. \eqref{eq:FullyAdaptiveModel:StorageCapacityLimits} enforces energy storage capacity bounds for storage units. Eq. \eqref{eq:FullyAdaptiveModel:LineCapacities} describes transmission line flow limit constraints. Eq. \eqref{eq:FullyAdaptiveModel:EnergyBalance} enforces system energy balance. Notice that Eqs. \eqref{eq:FullyAdaptiveModel:GeneratorOutputLimits}-\eqref{eq:FullyAdaptiveModel:EnergyBalance} are robust constraints, namely, they must hold for all $\overline{\mb{p}}^r \in \overline{\mathcal{P}}^r$.

In multistage robust UC model \eqref{eq:FullyAdaptiveModel}, the dispatch decision $\mb{p}_t(\overline{\mb{p}}^r_{[t]})$ at time $t$ only depends on $\overline{\mb{p}}^r_{[t]}$, the realization of uncertain available renewable power \emph{up to} time $t$, where $[t] := \{1,\dots,t\}$. This makes the decision policy non-anticipative. In contrast, a \emph{two-stage} robust UC can be formulated similarly to \eqref{eq:FullyAdaptiveModel} but replacing $\mb{p}_t(\overline{\mb{p}}^r_{[t]})$ by $\mb{p}_t(\overline{\mb{p}}^r)$, thus making the dispatch decision at time $t$ adaptive to the realization of uncertain available renewable power over all time periods, which violates non-anticipativity. A more illuminating way to see the difference is discussed in \cite[Section 3.1]{LorcaSunLitvinovZheng2016AffineUC}, where the multistage robust UC is reformulated as a nested sequence of $T$ stages of $\min$-$\max$ problems, while the two-stage model can be reformulated as a $\min$-$\max$-$\min$ problem.






\vspace{-1mm}
\subsection{Affine Dispatch Policy} \label{Section:Model:AffinePolicy}
Problem \eqref{eq:FullyAdaptiveModel} is quite computationally challenging, which can be seen from the fact that decision $\mb{p}(\cdot)$ is in the infinite dimensional space of functions. Alternatively, the computational difficulty can also be appreciated from the nested reformulations mentioned above.


To make the problem computationally tractable, we restrict our attention to affine policies, also known as linear decision rules \cite{BenTal2004, Kuhn2011primal,Jabr2013AdjustableOPF}. To give some insight on the affine dispatch policy, let us write it as $p_{it}^g(\overline{\mb{p}}^r_{[t]})=\hat{p}_{it}^g + \sum_{j\in\mathcal{N}^r}\sum_{\tau\leq t}\alpha_{itj\tau}(\overline{p}^r_{j\tau}-\hat{p}^r_{j\tau})$, where $\hat{p}^r_{j\tau}$ is the forecast (or nominal) available renewable power, $\hat{p}_{it}^g$ is the dispatch level if the realized available renewable power is equal to the forecast, and $\alpha_{itj\tau}$ is the sensitivity coefficient of dispatch on the deviation between the realized and forecast available renewable power. Note that this affine policy depends on the uncertainty at all buses and all time periods prior to $t$. We call such a policy a full affine policy.


It turns out that, for large-scale power systems, the above full affine policy for problem \eqref{eq:FullyAdaptiveModel} is too computationally difficult to solve \cite{LorcaSunLitvinovZheng2016AffineUC}. To deal with this difficulty, we use the following simplified affine policy that adapts to an aggregation of uncertainty:
\begin{subequations} \label{eq:AffinePolicy}
\begin{align}
& \mbox{Conventional units:}\quad p^g_{it}(\overline{\mb{p}}^r_{[t]}) = w^g_{it} + W^g_{it} \; \sum_{j \in \mathcal{N}^r} \overline{p}^r_{jt} \label{eq:AffinePolicy:Generators} \\
& \mbox{Storage units:} \quad p^{s+}_{it}(\overline{\mb{p}}^r_{[t]}) = w^{s+}_{it} + W^{s+}_{it} \; \sum_{j \in \mathcal{N}^r} \overline{p}^r_{jt} \label{eq:AffinePolicy:StorageUnitsOutput} \\
& \hspace{2.4cm} p^{s-}_{it}(\overline{\mb{p}}^r_{[t]}) = w^{s-}_{it} + W^{s-}_{it} \; \sum_{j \in \mathcal{N}^r} \overline{p}^r_{jt} \label{eq:AffinePolicy:StorageUnitsInput}\\
& \mbox{Renewable units:} \quad p^r_{it}(\overline{\mb{p}}^r_{[t]}) = w^r_{it} + W^r_{t} \, \overline{p}^r_{it}, \;\, 
\label{eq:AffinePolicy:RenewableUnits}
\end{align}
\end{subequations}
where $\mb{w}, \mb{W}$ are the new decision variables. In this type of policy, dispatch decisions of generators and storage units depend linearly on the total available renewable power at a system level, and renewable units depend linearly on their own local available power. As we will show, this simplified affine policy performs surprisingly well for \eqref{eq:FullyAdaptiveModel}.


Problem \eqref{eq:FullyAdaptiveModel} under affine policy \eqref{eq:AffinePolicy} can be written in a compact form as
\begin{subequations} \label{eq:AffineModel}
\begin{align}
& \min\limits_{\mb{x}\in X,\mb{w},\mb{W}} \;\; \left\{ \mb{c}^\top \mb{x} \; + \; \max\limits_{\overline{\mb{p}}^r \in \overline{\mathcal{P}}^r} \; \sum_{t \in \mathcal{T}} \mb{C}^\top_t \hspace{-1mm} \left( \mb{w}_t + \mb{W}_{\hspace{-1mm} t} \, \overline{\mb{p}}^r_{t} \right) \right\} \label{eq:AffineModel:Objective} \\
&\mbox{s.t.}\; \mb{w}_t + \mb{W}_{\hspace{-1mm} t} \, \overline{\mb{p}}^r_{t} \in \Omega_t \left( \mb{x}, \, \mb{w}_{[t-1]} + \mb{W}_{\hspace{-1mm} [t-1]} \, \overline{\mb{p}}^r_{[t-1]}, \, \overline{\mb{p}}^r_{t} \right) \quad \forall\overline{\mb{p}}^r \in \overline{\mathcal{P}}^r, t \in \mathcal{T} \label{eq:AffineModel:DispatchConstraints}
\end{align}
\end{subequations}
where \eqref{eq:AffineModel:DispatchConstraints} represents \eqref{eq:FullyAdaptiveModel:GeneratorOutputLimits}-\eqref{eq:FullyAdaptiveModel:EnergyBalance}. Note that \eqref{eq:AffineModel} is still a large-scale robust optimization problem with mixed-integer variables. In section \ref{Section:SolutionMethod}, we will show that exploiting the structure of \eqref{eq:AffineModel} is crucial to efficiently solving it.


\subsection{Policy-guided look-ahead ED method} \label{Section:Model:EDMethods}
The solution of problem \eqref{eq:AffineModel} not only provides a UC schedule $\mb{x}$, but also a dispatch policy $\mb{p}(\cdot)$ that may be utilized in the real-time hour-to-hour dispatch process. We will show that using this policy can significantly improve the flexibility of real-time dispatch.

Consider the real-time dispatch operation at time $t$. Denote the available renewable power realized up to this time as $\overline{\mb{p}}^{r,realized}_{[t]}$. For future time periods $\tau>t$, the available renewable power is forecasted as $\overline{\mb{p}}^{r,forecast}_{\tau}$. Moreover, denote dispatch decisions of all the units realized up to time $t-1$ as $\mb{p}^{realized}_{[t-1]}$. We propose a new ED model, called the \emph{policy-guided look-ahead} ED:
\begin{subequations} \label{eq:PolicyGuidedLAED}
\begin{align}
& \min\limits_{\hat{\mb{p}}_t,\dots,\hat{\mb{p}}_{t+T'}} \;\; \sum_{\tau = t}^{t+T'}\sum_{i \in \mathcal{N}_g} C^g_i \, \hat{p}^g_{i\tau} \label{eq:PolicyGuidedLAED:Objective} \\
\mbox{s.t.} \;\; & \hat{\mb{p}}_t \in \Omega_t (\mb{p}^{realized}_{[t-1]}, \overline{\mb{p}}^{r,realized}_{t}) \label{eq:PolicyGuidedLAED:pt} \\
& \hat{\mb{p}}_{\tau} \in \Omega_{\tau} (\hat{\mb{p}}_{[\tau-1]}, \overline{\mb{p}}^{r,forecast}_{\tau})\;\; \tau \in [t+1:t+T'] \label{eq:PolicyGuidedLAED:ptau} \\
& -RD_{i,t+1} \, x^o_{i,t+1} - SD_{i,t+1} \, x^-_{i,t+1} \leq p^g_{i,t+1}(\overline{\mb{p}}^r_{[t+1]}) - \hat{p}^g_{it} \leq RU_{i,t+1} \, x^o_{it} + SU_{i,t+1} \, x^+_{i,t+1} \label{eq:PolicyGuidedLAED:GeneratorRobustRamping} \\
& \hspace{8cm} \forall \, \overline{\mb{p}}^r_{[t+1]} \in \overline{\mathcal{P}}^r_{[t+1]} \left( \overline{\mb{p}}^{r,realized}_{[t]} \right), \, i \in \mathcal{N}^g \notag \\
&  \sum_{k=t}^{\tau}\left( \eta^s_i \, p^{s-}_{ik}(\overline{\mb{p}}^{r,forecast}_{[k]}) - p^{s+}_{ik}(\overline{\mb{p}}^{r,forecast}_{[k]})\right) = \sum_{k=t}^{\tau} \left( \eta^s_i \, \hat{p}^{s-}_{ik} - \hat{p}^{s+}_{ik} \right) \quad \forall \, i \in \mathcal{N}^s, \, \tau \in [t+1:t+T']. \label{eq:PolicyGuidedLAED:StorageCapacityDeterminedByPolicy}
\end{align}
\end{subequations}
Here $ \hat{\mb{p}}_t$ is the dispatch decision to be implemented at time $t$; $ \hat{\mb{p}}_{\tau}$ for $\tau>t$ is dispatch decision for future time periods under the forecast condition; $T'$ is the number of look-ahead time periods in the ED model; $\Omega_t(\mb{p}^{realized}_{[t-1]}, \overline{\mb{p}}^{r,realized}_{t})$ in \eqref{eq:PolicyGuidedLAED:pt} represents all deterministic dispatch constraints at time $t$ (that is, a deterministic version of eqs. \eqref{eq:FullyAdaptiveModel:GeneratorOutputLimits}-\eqref{eq:FullyAdaptiveModel:EnergyBalance} for time $t$); $\Omega_{\tau}( \hat{\mb{p}}_{[\tau-1]},\overline{\mb{p}}^{r,forecast}_{\tau})$ in \eqref{eq:PolicyGuidedLAED:ptau}
represents all dispatch constraints at time $\tau>t$ which depends on dispatch decisions in previous periods, $ \hat{\mb{p}}_{[\tau-1]}$, and on forecast available renewable power, $\overline{\mb{p}}^{r,forecast}_{\tau}$.

The key constraints  are \eqref{eq:PolicyGuidedLAED:GeneratorRobustRamping} and \eqref{eq:PolicyGuidedLAED:StorageCapacityDeterminedByPolicy}. Constraint \eqref{eq:PolicyGuidedLAED:GeneratorRobustRamping} enforces \emph{robust} ramping requirement for dispatch from time $t$ to $t+1$, where dispatch at $t+1$ uses the affine policy $p^g_{i,t+1}(\overline{\mb{p}}^r_{[t+1]})$ obtained from the multistage UC model \eqref{eq:AffineModel}. Constraint \eqref{eq:PolicyGuidedLAED:StorageCapacityDeterminedByPolicy} enforces that the energy levels of storage units under $\hat{\mb{p}}$ at time $\tau>t$ match those determined by the affine dispatch policy for forecast available renewable power. $\overline{\mathcal{P}}^r_{[t+1]}(\overline{\mb{p}}^{r,realized}_{[t]})$ is the uncertainty set $\overline{\mathcal{P}}^r$ restricted to the observed available renewable power up to time $t$, and projected up to time period $t+1$. The philosophy of constraints \eqref{eq:PolicyGuidedLAED:GeneratorRobustRamping} and \eqref{eq:PolicyGuidedLAED:StorageCapacityDeterminedByPolicy} is that using the affine dispatch policy $p^g_{i,\tau}(\overline{\mb{p}}^r_{[\tau]})$ and $p^{s}_{i\tau}(\overline{\mb{p}}^{r,forecast}_{[\tau]})$ at future time periods can provide effective guidance to the dispatch decision at time $t$, because these affine policies are obtained from the multistage robust UC model which has a holistic view of uncertainty. We will show that this policy-guided look-ahead ED can better utilize storage devices and has a better capability to hedge against uncertainty than a deterministic look-ahead ED model (see Section \ref{Section:ComputationalExperiments:Simulation}).

\section{Dynamic Uncertainty Set for Wind and Solar Power} \label{Section:UncertaintySet}
Wind and solar power present significant temporal and spatial correlations \cite{Xie2011WindIntegration}. In this section, we propose a new type of dynamic uncertainty sets to capture such correlations.
\vspace{-2mm}
\subsection{Mathematical Formulation} \label{Section:UncertaintySet:MathematicalFormulation}
The dynamic uncertainty set for the available wind and solar power over a time horizon $\mathcal{T}$ is given as
\begin{subequations} \label{eq:UncertaintySet}
\begin{align}
& \overline{\mathcal{P}}^r = \Big\{ \overline{\mb{p}}^r = (\overline{p}_{it}^r)_{i,t} \, : \; \exists \; \mb{u}, \, \mb{v} \;\; \mbox{s.t.} \label{eq:UncertaintySet:def} \\
& \hspace{2cm} \overline{p}^r_{it} = f_{it} + g_{it} \, u_{it} \quad \forall \, i \in \mathcal{N}^r, \, t \in \mathcal{T} \label{eq:UncertaintySet:p} \\
& \hspace{2cm} \mb{u}_{t} = \sum_{l=1}^L \mb{A}^l \, \mb{u}_{t-l} + \mb{B} \, \mb{v}_{t} \quad \forall \, t \in \mathcal{T} \label{eq:UncertaintySet:u} \\
& \hspace{2cm} \| \mb{v}_{t} \| \leq \Gamma \quad \forall \, t \in \mathcal{T} \label{eq:UncertaintySet:v} \\
& \hspace{2cm} \sum_{t \in \mathcal{T}} \| \mb{v}_{t} \| \leq \rho \, \Gamma \, |\mathcal{T}| \label{eq:UncertaintySet:BudgetOverTimePeriods} \\
& \hspace{2cm} 0 \leq \overline{p}^r_{it} \leq \overline{p}^{r,max}_{it} \quad \forall \, i \in \mathcal{N}^r, \, t \in \mathcal{T} \Big\}, \label{eq:UncertaintySet:pBounds}
\end{align}
\end{subequations}
where $\overline{p}^r_{it}$ is available power of renewable unit $i$ at time $t$, $\mb{f}$ and $\mb{g}$ account for deterministic seasonal components, $\| \cdot \|$ is a norm, $\Gamma > 0$ is a size parameter, $\rho \in (0,1]$ determines a ``budget over time periods'', and $\overline{p}^{r,max}_{it}$ determines an upper bound on $\overline{p}^r_{it}$. Here $\mb{v}_{t} \in \mathbb{R}^{N_v}$ with $N_v$ between $1$ and $|\mathcal{N}^r|$.

The key feature of \eqref{eq:UncertaintySet} is that both temporal and spatial correlations between uncertain wind and solar power are captured in \eqref{eq:UncertaintySet:u}, where $\mb{A}^l$ and $\mb{B}$ determine the temporal and spatial correlations of uncertain renewable power. More specifically, $\mb{u}_t$ is the uncertainty in the wind and solar power output after the seasonality pattern ($f_{it},g_{it}$) is filtered out; $\mb{u}_t$ includes both temporal and spatial correlations of these uncertain resources in $\mb{A}^l$ and $\mb{B}$ matrices; $\mb{v}_t$ represents residual uncertainty after temporal and spatial correlations are further removed from $\mb{u}_t$. So $\mb{v}_t$ can be viewed as representing a random vector with uncorrelated components over time and space. The support of $\mb{v}_t$ is described by \eqref{eq:UncertaintySet:v}-\eqref{eq:UncertaintySet:BudgetOverTimePeriods}. The size of the support is controlled by $\Gamma$, which is analogous to the maximum number of standard deviations that we allow for variations in each component of $\mb{v}_t$. We call \eqref{eq:UncertaintySet} a \emph{dynamic} uncertainty set. As a special case, if the dimension of $\mb{v}_t$ is the same as that of $\mb{u}_t$, $\mb{B}$ is the identity matrix, $\mb{A}^l$'s are zero, and $\rho=1$, we obtain a \emph{static} uncertainty set that ignores temporal and spatial correlations and is separable over time periods, similar to the budget uncertainty set used in literature (see e.g. \cite{BertsimasSun2013Adaptive,LorcaSunLitvinovZheng2016AffineUC}).


To further illustrate the meaning of $\Gamma$, consider the static case described above (where $\mb{u}_{t} = \mb{v}_{t}$ and $\rho=1$), the norm is the $\ell_\infty$ norm, i.e. $\| \cdot \| = \| \cdot \|_\infty$, and $g_{it}$ corresponds to the standard deviation of available power at renewable unit $i$ and time $t$. In this case we obtain $f_{it} - \Gamma \, g_{it} \leq \overline{p}^r_{it} \leq f_{it} + \Gamma \, g_{it}$ and no coupling relations between renewable units or time periods, and we can thus interpret $\Gamma$ as the ``number of standard deviations'' that we allow for available power variations at each renewable unit and time period. This is the most basic uncertainty set corresponding to a box. For the general dynamic uncertainty set, the intuition for the choice of $\Gamma$ is similar, except that $\Gamma$ will now determine the ``number of standard deviations'' that we allow for variations of each component in $\mb{v}_{t}$.

The concept of dynamic uncertainty sets was first proposed in \cite{LorcaSun2015Wind}, where uncertainty in \emph{wind speed} is modeled and a power curve is used to transform wind speed into available wind power. In this paper, we directly model uncertainty in renewable power to improve computational efficiency. Furthermore, we model both wind and solar power uncertainty and their correlations. Notice that the norm in \eqref{eq:UncertaintySet:v}, \eqref{eq:UncertaintySet:BudgetOverTimePeriods} can be any norm such as $\ell_1, \ell_2, \ell_\infty$ or a combination thereof, such as the intersection of $\ell_1$ and $\ell_\infty$.

\subsection{Parameter Estimation And Dimension Reduction} \label{Section:UncertaintySet:ParameterEstimation}
To estimate the parameters of the dynamic uncertainty set \eqref{eq:UncertaintySet}, we consider the following stochastic model for available power of renewable units:
\begin{subequations} \label{eq:StochasticModel}
\begin{align}
& \tilde{p}^r_{it} = f_{it} + g_{it} \, \tilde{u}_{it} \quad \forall \, i \in \mathcal{N}^r, \, t \in \mathcal{T} \label{eq:StochasticModel:p} \\
& \tilde{\mb{u}}_{t} = \sum_{l=1}^L \mb{A}^l \, \tilde{\mb{u}}_{t-l} + \tilde{\mb{\epsilon}}_{t} \quad \forall \, t \in \mathcal{T} \label{eq:StochasticModel:u},
\end{align}
\end{subequations}
where $\tilde{p}^r_{it}$ is the power available at renewable unit $i$ at time $t$, and $\tilde{\mb{\epsilon}}_{t}$ is an i.i.d. random vector under certain distribution. In this paper, we assume $\tilde{\mb{\epsilon}}_t$ to have a multivariate normal distribution centered at zero with covariance matrix $\mb{\Sigma}$. Other distributions could be assumed with a proper choice of the norm in \eqref{eq:UncertaintySet:v}, \eqref{eq:UncertaintySet:BudgetOverTimePeriods} in the dynamic uncertainty set (e.g. using the approach in \cite{Chen2007ARobust}).

In order to estimate the parameters of this model, $\mb{f}$ and $\mb{g}$ can be estimated using linear regression after identifying daily seasonality in the volatility of available renewable power. Next, $\tilde{\mb{u}}_{t}$ corresponds to a multivariate autoregressive process, and given a choice of time lag $L$, $\mb{A}^l$ and $\mb{\Sigma}$ can be estimated using statistical inference techniques from time series analysis \cite{Reinsel2003Elements}. Using this, $\mb{B}$ in \eqref{eq:UncertaintySet} can be estimated by Cholesky decomposition of $\mb{\Sigma}$ as $\mb{\Sigma} = \mb{B} \mb{B}^\top$.

At this point, it is important to note that the dimension of the uncertainty set plays a fundamental role in the difficultness of solving the associated robust optimization problem. Under the presence of many renewable units, $\overline{\mathcal{P}}^r$ can have a large dimension. We can reduce the dimension of $\overline{\mathcal{P}}^r$ by principal component analysis as follows. The matrix $\mb{\Sigma}$ can be eigen-decomposed as $\mb{\Sigma} = \mb{V} \mb{\Lambda} \mb{V}^\top$, where $\mb{V}$ contains the eigenvectors and $\mb{\Lambda}$ has the eigenvalues in the diagonal. Then, we ignore the smaller eigenvalues in $\mb{\Lambda}$ and the corresponding eigenvectors in $\mb{V}$ by removing the corresponding columns of $\mb{B} = \mb{V} \mb{\Lambda}^{1/2}$. In this way, the number of columns $N_v$ in \eqref{eq:UncertaintySet} left in $\mb{B}$ can be any number from $1$ to $|\mathcal{N}^r|$. If $N_v$ is selected too close to $|\mathcal{N}^r|$, then a high-dimensional uncertainty set is obtained, resulting in a large problem. If $N_v$ is too close to $1$, then the uncertainty representation may be too inaccurate. The right balance will depend on the particular instance solved.

\section{Solution Method} \label{Section:SolutionMethod}
The affine multistage robust UC model \eqref{eq:AffineModel} is a so-called semi-infinite program, i.e. there are finite number of decision variables, but infinite number of constraints. Due to this, a deterministic counterpart of \eqref{eq:AffineModel} needs to be formulated. There are two main classes of approaches for this purpose. The most widely used approach is the duality based approach \cite{Bental2009robustbook} that replaces each robust constraint by its dual program with additional variables and constraints. The other less explored approach is based on constraint generation \cite{BertsimasDunning2015Reformulations, LorcaSunLitvinovZheng2016AffineUC}, which dynamically generates violated scenarios and the associated deterministic constraints. These two approaches are combined in the solution method proposed in this section, and special structures of \eqref{eq:AffineModel} are exploited.


Section \ref{Section:SolutionMethod:ConstraintGeneration} introduces the basic constraint generation framework. Section \ref{Section:SolutionMethod:SimpleConstraints} exploits the special structure of the robust generation limit and energy balance constraints in \eqref{eq:AffineModel}. Section \ref{Section:SolutionMethod:OuterApproximation} presents an outer approximation method for reformulating the inter-temporal constraints in \eqref{eq:AffineModel}. Section \ref{Section:SolutionMethod:AlgorithmicEnhancements} discusses several techniques to further enhance the efficiency of the overall algorithm.

\subsection{Constraint Generation Framework} \label{Section:SolutionMethod:ConstraintGeneration}
Constraint generation (CG) is recently applied to solve large-scale robust optimization problems \cite{LorcaSunLitvinovZheng2016AffineUC,BertsimasDunning2015Reformulations}. The master robust UC problem in the CG algorithm can be written as
\begin{subequations} \label{eq:MasterProblem}
\begin{align}
& \min\limits_{(\mb{x},\mb{w},\mb{W},z) \in \Omega} \quad \mb{c}^\top \mb{x} \, + \, z \\
& \mbox{s.t.} \;\; \mb{a}_k (\mb{W})^\top \overline{\mb{p}}^r \; \leq \; b_k(\mb{x},\mb{w},z) \;\; \forall \, k \in [K], \, \overline{\mb{p}}^r \in P_k, \label{eq:MPconstr}
\end{align}
\end{subequations}
where $z$ corresponds to the worst-case dispatch cost, $K$ is the number of robust constraints in the problem, and $P_k$ is the current set of extreme points of the uncertainty set $\overline{\mathcal{P}}^r$ considered for the $k$-th robust constraint. The basic CG algorithm solves the master problem \eqref{eq:MasterProblem}, and checks if the $k$-th robust constraint is violated by the current solution, and if so, the associated worst-case scenario from $\overline{\mathcal{P}}^r$ is added to $P_k$, and the master program is solved again. This method is formally presented in Algorithm \ref{Algorithm:ConstraintGeneration}.

\begin{algorithm}
\caption{Constraint generation}
 \label{Algorithm:ConstraintGeneration}
\small
\begin{algorithmic}[1]
\REPEAT
    \STATE{Solve Master Problem \eqref{eq:MasterProblem}}
    \FORALL{$k \in \{1,...,K\}$}
        \STATE{$\overline{\mb{p}}^r_k \leftarrow \mbox{argmax} \left\{ \mb{a}_k (\mb{W})^\top \overline{\mb{p}}^r: \; \overline{\mb{p}}^r \in \overline{\mathcal{P}}^r \right\}$}
        \STATE{If $\mb{a}_k (\mb{W})^\top \overline{\mb{p}}^r_k > b_k(\mb{x},\mb{w},z)$ let $P_k \leftarrow P_k \cup \{\overline{\mb{p}}^r_k\}$}
    \ENDFOR
\UNTIL{$\mb{a}_k (\mb{W})^\top \overline{\mb{p}}^r_k \leq b_k(\mb{x},\mb{w},z)$ for all $k \in [K]$}
\end{algorithmic}
\end{algorithm}


To give a concrete example of the robust constraint \eqref{eq:MPconstr}, let us consider the upper output limit constraint in \eqref{eq:FullyAdaptiveModel:GeneratorOutputLimits} for a generator $i$ at time $t$ under affine policy \eqref{eq:AffinePolicy}. The robust constraint in this case is given by
\begin{align*}
& w^g_{it} + W^g_{it} \; \sum_{j \in \mathcal{N}^r} \overline{p}^r_{jt} \leq x^o_{it} \, \overline{p}^g_{it} \quad \forall \overline{\mb{p}}^r \in \overline{\mathcal{P}}^r,
\end{align*}
where $a_{k j \tau}(\mb{W}) = W^g_{it}$ for $\tau = t$ and $a_{k j \tau}(\mb{W}) = 0$ for $\tau \neq t$, for any $j \in \mathcal{N}^r$, and $b_k(\mb{x},\mb{w},z) = x^o_{it} \, \overline{p}^g_{it} - w^g_{it}$. All other robust constraints in the master problem are similarly defined.

This basic CG framework is the starting point to develop a practical algorithm for solving large-scale robust UC problems. The key is to fully exploit the structure of \eqref{eq:AffineModel}. In the full algorithm described in this section, we will handle transmission line flow limit constraints \eqref{eq:FullyAdaptiveModel:LineCapacities} through CG; for all other constraints, we use more efficient reformulations.


\subsection{Reformulation of generation limit and balance constraints} \label{Section:SolutionMethod:SimpleConstraints}
The deterministic counterparts of the robust generation limit \eqref{eq:FullyAdaptiveModel:GeneratorOutputLimits} and the energy balance constraints \eqref{eq:FullyAdaptiveModel:EnergyBalance} in the affine UC model \eqref{eq:AffineModel} can be explicitly derived without any dualization of the uncertainty set.


\subsubsection{Generation limit constraints} \label{Section:SolutionMethod:SimpleConstraints:OutputLimits}
Due to the structure of the simplified affine policy \eqref{eq:AffinePolicy}, we can directly identify the worst-case scenarios for the robust generation limit constraints.
\begin{proposition} \label{Proposition:OutputLimitReformulation}
Under affine policy \eqref{eq:AffinePolicy}, robust generation limit constraints \eqref{eq:FullyAdaptiveModel:GeneratorOutputLimits} are equivalent to
\begin{align}
& x^o_{it} \, \underline{p}^g_{it} \leq w^g_{it} + W^g_{it} \; \overline{p}^{r,total}_{t} \leq x^o_{it} \, \overline{p}^g_{it} \quad \forall \, i \in \mathcal{N}^g, \, t \in \mathcal{T}, \, \overline{p}^{r,total}_{t} \in \left\{ \overline{p}^{r,total,min}_{t}, \, \overline{p}^{r,total,max}_{t} \right\}, \label{eq:GeneratorOutputLimitsReformulation}
\end{align}
where
\begin{subequations} \label{eq:MinAndMaxTotalAvailableRenewablePower}
\begin{align}
& \overline{p}^{r,total,min}_{t} =  \min\limits_{\overline{\mb{p}}^r \in \overline{\mathcal{P}}^r} \; \sum_{j \in \mathcal{N}^r} \overline{p}^r_{jt} \quad \forall \, \, t \in \mathcal{T} \label{eq:MinAndMaxTotalAvailableRenewablePower:Min} \\
& \overline{p}^{r,total,max}_{t} =  \max\limits_{\overline{\mb{p}}^r \in \overline{\mathcal{P}}^r} \; \sum_{j \in \mathcal{N}^r} \overline{p}^r_{jt} \quad \forall \, \, t \in \mathcal{T}. \label{eq:MinAndMaxTotalAvailableRenewablePower:Max}
\end{align}
\end{subequations}
\end{proposition}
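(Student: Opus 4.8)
The plan is to exploit the special form of the simplified affine policy \eqref{eq:AffinePolicy:Generators}: the dispatch of generator $i$ at time $t$ depends on the uncertain vector $\overline{\mb{p}}^r$ \emph{only} through the scalar aggregate $\overline{p}^{r,total}_{t} := \sum_{j\in\mathcal{N}^r}\overline{p}^r_{jt}$, namely $p^g_{it}(\overline{\mb{p}}^r_{[t]}) = w^g_{it} + W^g_{it}\,\overline{p}^{r,total}_{t}$. Hence, for fixed $(i,t)$, the robust constraint \eqref{eq:FullyAdaptiveModel:GeneratorOutputLimits} is equivalent to requiring $x^o_{it}\,\underline{p}^g_{it} \le w^g_{it} + W^g_{it}\,s \le x^o_{it}\,\overline{p}^g_{it}$ for every $s$ in the projected set $S_t := \{\, \sum_{j\in\mathcal{N}^r}\overline{p}^r_{jt} : \overline{\mb{p}}^r \in \overline{\mathcal{P}}^r \,\}$. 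From here the two ingredients I would assemble are: (i) $S_t$ is a compact interval $[\overline{p}^{r,total,min}_{t},\,\overline{p}^{r,total,max}_{t}]$ with endpoints given exactly by the optimization problems \eqref{eq:MinAndMaxTotalAvailableRenewablePower}; and (ii) an affine function of a single variable is extremized over an interval at one of its endpoints.

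For (i), I would argue that $\overline{\mathcal{P}}^r$ is the projection onto the $\overline{\mb{p}}^r$-coordinates of the set of triples $(\overline{\mb{p}}^r,\mb{u},\mb{v})$ satisfying the affine equalities \eqref{eq:UncertaintySet:p}--\eqref{eq:UncertaintySet:u}, the convex norm inequalities \eqref{eq:UncertaintySet:v}--\eqref{eq:UncertaintySet:BudgetOverTimePeriods}, and the box constraints \eqref{eq:UncertaintySet:pBounds}; this set is convex and (being closed and, by \eqref{eq:UncertaintySet:pBounds}, bounded in the $\overline{\mb{p}}^r$-coordinates) compact, assuming it is nonempty — if it is empty, both \eqref{eq:FullyAdaptiveModel:GeneratorOutputLimits} and \eqref{eq:GeneratorOutputLimitsReformulation} are vacuous. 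Consequently the linear functional $\overline{\mb{p}}^r \mapsto \sum_{j}\overline{p}^r_{jt}$ maps $\overline{\mathcal{P}}^r$ onto a nonempty compact convex subset of $\mathbb{R}$, i.e. a closed interval, whose endpoints are by definition precisely $\overline{p}^{r,total,min}_{t}$ and $\overline{p}^{r,total,max}_{t}$ as in \eqref{eq:MinAndMaxTotalAvailableRenewablePower}.

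For (ii), given $S_t = [\overline{p}^{r,total,min}_{t},\,\overline{p}^{r,total,max}_{t}]$, the upper bound $w^g_{it} + W^g_{it}\,s \le x^o_{it}\,\overline{p}^g_{it}$ holds for all $s\in S_t$ if and only if it holds at whichever endpoint maximizes $s\mapsto w^g_{it}+W^g_{it}s$ (which endpoint this is depends on the sign of $W^g_{it}$, and is immaterial if $W^g_{it}=0$); symmetrically the lower bound reduces to the endpoint minimizing the same affine map. Since \eqref{eq:GeneratorOutputLimitsReformulation} imposes the two-sided inequality at \emph{both} endpoints $\overline{p}^{r,total}_{t}\in\{\overline{p}^{r,total,min}_{t},\overline{p}^{r,total,max}_{t}\}$, it simultaneously covers both worst cases, and conversely every $\overline{\mb{p}}^r\in\overline{\mathcal{P}}^r$ yields a value $s\in S_t$ sandwiched between the two endpoints, so \eqref{eq:GeneratorOutputLimitsReformulation} implies \eqref{eq:FullyAdaptiveModel:GeneratorOutputLimits}. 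Repeating the argument for every $i\in\mathcal{N}^g$ and $t\in\mathcal{T}$ gives the equivalence.

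I do not expect a genuine obstacle here; the only step needing a little care is (i) — the claim that the image of $\overline{\mathcal{P}}^r$ under the aggregation map is exactly the interval between the two computed extremes — which hinges on the convexity and compactness of $\overline{\mathcal{P}}^r$. Everything else is the elementary observation that the affine policy collapses a high-dimensional robust constraint into a one-dimensional one whose worst cases are attained at the two endpoints of a line segment.
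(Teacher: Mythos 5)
Your proposal is correct and follows essentially the same route as the paper: the paper's one-line proof simply states that $\max_{\overline{\mb{p}}^r \in \overline{\mathcal{P}}^r} a \sum_{j} \overline{p}^r_{jt} = \max\bigl\{ a\,\overline{p}^{r,total,min}_{t},\, a\,\overline{p}^{r,total,max}_{t} \bigr\}$ for any $a$, which is exactly your reduction to the scalar aggregate with the worst case at one of the two extremes. Your step (i) on convexity and the interval image is harmless but not needed — since $s \mapsto W^g_{it}\, s$ is monotone, the extremal values over the projected set are attained at its minimum and maximum regardless of whether that set is an interval, which is all the paper's argument uses.
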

\begin{proof}[Proof of Proposition \ref{Proposition:OutputLimitReformulation}.]
The result follows from the fact that
\[ \hspace{-0.1cm} \max\limits_{\overline{\mb{p}}^r \in \overline{\mathcal{P}}^r} a \hspace{-0.1cm} \sum_{j \in \mathcal{N}^r} \hspace{-0.1cm} \overline{p}^r_{jt} = \max \left\{ a \, \overline{p}^{r,total,min}_{t}, \, a \, \overline{p}^{r,total,max}_{t} \right\}, \]
for any given $a$.
\end{proof}
A similar result holds for robust constraints of storage units' output and input limits \eqref{eq:FullyAdaptiveModel:StorageUnitOutputLimits} and \eqref{eq:FullyAdaptiveModel:StorageUnitInputLimits}, as well as for renewable unit output limit constraints \eqref{eq:FullyAdaptiveModel:RenewableUnitOutputLimits}. Details are omitted for space. 

\subsubsection{Energy balance constraints} \label{Section:SolutionMethod:SimpleConstraints:EnergyBalance}
The deterministic counterpart of the robust energy balance constraints can also be obtained in closed form.
\begin{proposition} \label{Proposition:EnergyBalanceReformulation}
Under affine policy \eqref{eq:AffinePolicy}, robust energy balance constraints \eqref{eq:FullyAdaptiveModel:EnergyBalance} are equivalent to the following system of equations, for every $t\in\mathcal{T}$,
\begin{subequations} \label{eq:EnergyBalanceReformulation}
\begin{align}
&\hspace{-2mm} \sum_{i \in \mathcal{N}^g} w^g_{it} + \sum_{i \in \mathcal{N}^r} w^r_{it} + \sum_{i \in \mathcal{N}^s} \left( w^{s+}_{it} - w^{s-}_{it} \right) = \sum_{j \in \mathcal{N}^d} d_{jt}  \label{eq:EnergyBalanceReformulation:w} \\
& \hspace{-2mm} \sum_{i \in \mathcal{N}^g} W^g_{it} + W^r_{t} + \sum_{i \in \mathcal{N}^s} (W^{s+}_{it}-W^{s-}_{it}) = 0, \label{eq:EnergyBalanceReformulation:W}
\end{align}
\end{subequations}
whenever the uncertainty set $\overline{\mathcal{P}}^r$ is full-dimensional.
\end{proposition}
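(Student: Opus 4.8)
The plan is to substitute the affine policy \eqref{eq:AffinePolicy} into the robust energy balance constraint \eqref{eq:FullyAdaptiveModel:EnergyBalance} and then exploit the fact that, after substitution, the right-hand side is an \emph{affine} function of $\overline{\mb{p}}^r$ whose only dependence on $\overline{\mb{p}}^r$ is through the single scalar $\overline{p}^{r,total}_t := \sum_{j\in\mathcal{N}^r}\overline{p}^r_{jt}$.

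First I would fix $t\in\mathcal{T}$ and plug \eqref{eq:AffinePolicy:Generators}, \eqref{eq:AffinePolicy:StorageUnitsOutput}, \eqref{eq:AffinePolicy:StorageUnitsInput}, \eqref{eq:AffinePolicy:RenewableUnits} into \eqref{eq:FullyAdaptiveModel:EnergyBalance}. The generator and storage terms each contribute a constant ($\mb{w}$-dependent) part plus a multiple of $\overline{p}^{r,total}_t$. The renewable term contributes $\sum_{i\in\mathcal{N}^r} w^r_{it} + W^r_t\sum_{i\in\mathcal{N}^r}\overline{p}^r_{it}$, and the key observation is that $\sum_{i\in\mathcal{N}^r}\overline{p}^r_{it}$ is exactly $\overline{p}^{r,total}_t$, so this term again reduces to a constant plus a multiple of $\overline{p}^{r,total}_t$. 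Collecting terms, the robust constraint at time $t$ reads $c^1_t + c^2_t\,\overline{p}^{r,total}_t = \sum_{j\in\mathcal{N}^d}d_{jt}$ for all $\overline{\mb{p}}^r\in\overline{\mathcal{P}}^r$, where $c^1_t$ is precisely the left-hand side of \eqref{eq:EnergyBalanceReformulation:w} and $c^2_t$ is precisely the left-hand side of \eqref{eq:EnergyBalanceReformulation:W}.

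Next I would establish the equivalence. The ``if'' direction is immediate: if \eqref{eq:EnergyBalanceReformulation:w}--\eqref{eq:EnergyBalanceReformulation:W} hold, then $c^2_t=0$ and $c^1_t=\sum_j d_{jt}$, so the robust constraint holds identically in $\overline{\mb{p}}^r$. For the ``only if'' direction, rewrite the robust constraint as $c^2_t\,\overline{p}^{r,total}_t = \sum_j d_{jt} - c^1_t$ for all $\overline{\mb{p}}^r\in\overline{\mathcal{P}}^r$. The map $\overline{\mb{p}}^r\mapsto\overline{p}^{r,total}_t$ is a nonzero linear functional on $\mathbb{R}^{|\mathcal{N}^r|\cdot|\mathcal{T}|}$ (each of its coefficients on the variables $\overline{p}^r_{jt}$, $j\in\mathcal{N}^r$, equals one). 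Since $\overline{\mathcal{P}}^r$ is full-dimensional, any nonzero linear functional on the ambient space is non-constant on $\overline{\mathcal{P}}^r$, so $\overline{p}^{r,total}_t$ attains at least two distinct values there. An affine identity in a scalar that takes two distinct values forces the slope and then the intercept to match, i.e. $c^2_t=0$ (giving \eqref{eq:EnergyBalanceReformulation:W}) and then $c^1_t=\sum_j d_{jt}$ (giving \eqref{eq:EnergyBalanceReformulation:w}). Repeating for every $t\in\mathcal{T}$ completes the argument, and since the robust constraint decouples across $t$, the ``for every $t$'' quantifier costs nothing extra.

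The computations here are routine; the only point requiring genuine care is the last step's use of full-dimensionality to guarantee that $\overline{p}^{r,total}_t$ is actually non-constant on $\overline{\mathcal{P}}^r$. Without that hypothesis the coefficient $c^2_t$ need not vanish — for instance if $\overline{\mathcal{P}}^r$ happened to lie in a hyperplane on which $\overline{p}^{r,total}_t$ is constant, \eqref{eq:EnergyBalanceReformulation:w}--\eqref{eq:EnergyBalanceReformulation:W} would be only sufficient and not necessary. So I expect the main obstacle to be stating this non-degeneracy step cleanly rather than any substantive difficulty.
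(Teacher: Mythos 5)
Your proposal is correct and follows essentially the same route as the paper's proof: substitute the affine policy \eqref{eq:AffinePolicy} into \eqref{eq:FullyAdaptiveModel:EnergyBalance}, observe that the constraint becomes an affine identity in the uncertainty holding over all of $\overline{\mathcal{P}}^r$, and invoke full-dimensionality to force the linear coefficient to vanish and the constant terms to match, yielding \eqref{eq:EnergyBalanceReformulation:w}--\eqref{eq:EnergyBalanceReformulation:W}. The only difference is presentational: the paper states this tersely in vector form ($\mb{a}_t(\mb{W})^\top\overline{\mb{p}}^r_t=b_t(\mb{w})$ on a full-dimensional set implies $\mb{a}_t(\mb{W})=\mb{0}$, $b_t(\mb{w})=0$), whereas you reduce to the scalar aggregate $\overline{p}^{r,total}_t$ and spell out the non-constancy argument explicitly.
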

\begin{proof}[Proof of Proposition \ref{Proposition:EnergyBalanceReformulation}.]
The robust energy balance constraint for each $t$ can be written compactly as $\mb{a}_t(\mb{W})^\top\overline{\mb{p}}_t^r = {b}_t(\mb{w})$ for all $\overline{\mb{p}}_t^r \in \overline{\mathcal{P}}^r$, where $\mb{a}_t(\mb{W})$ and $b_t(\mb{w})$ are linear in $\mb{W}$ and $\mb{w}$, respectively. If $\overline{\mathcal{P}}^r$ is full-dimensional, then $\mb{a}_t(\mb{W})=\mb{0}$ and ${b}_t(\mb{w})=0$ must hold, which gives \eqref{eq:EnergyBalanceReformulation:w}-\eqref{eq:EnergyBalanceReformulation:W}.
\end{proof}

\subsection{Outer approximation for inter-temporal constraints} \label{Section:SolutionMethod:OuterApproximation}
The worst-case cost constraint, ramping constraints \eqref{eq:FullyAdaptiveModel:GeneratorRampingLimits}, and the storage capacity constraints \eqref{eq:FullyAdaptiveModel:StorageCapacityLimits} all involve decisions over consecutive time periods, i.e. they induce inter-temporal coupling between dispatch decisions. Dualizing these robust constraints introduces a large number of new variables and constraints, while directly applying CG may lead to slow convergence. In this section, we introduce an outer approximation method for efficient reformulation. Observe that, due to the simplified affine policy structure \eqref{eq:AffinePolicy}, the inter-temporal robust constraints in \eqref{eq:AffineModel} only depend on total system-level available renewable power rather than on bus-level details. So we can project the bus-level uncertainty set \eqref{eq:UncertaintySet} to its equivalence for system-level uncertainty. However, this latter uncertainty set still involves a large number of variables. Thus, we use outer approximation (OA) to further reduce its dimension. This technique is general. We use it here to reformulate inter-temporal constraints, and in section \ref{Section:SolutionMethod:AlgorithmicEnhancements:UpperBounds} for screening transmission constraints.

The inter-temporal robust constraints in \eqref{eq:AffineModel} can be written as
\begin{align}
& \max\limits_{\overline{\mb{p}}^{r,total}_{[t_1:t_2]} \in \overline{\mathcal{P}}^{r,total}_{[t_1:t_2]}} \; \sum_{t=t_1}^{t_2} a^{total}_t (\mb{W}) \; \overline{p}^{r,total}_t \; \leq \; b(\mb{x},\mb{w},z), \label{eq:TimeCoupledRobustConstraint}
\end{align}
where
\begin{align*}
& \overline{\mathcal{P}}^{r,total}_{[t_1:t_2]} = \Big\{ \overline{\mb{p}}^{r,total}_{[t_1:t_2]}: \; \exists \, \overline{\mb{p}}^r \in \overline{\mathcal{P}}^r \mbox{ s.t.} \;\; \overline{p}^{r,total}_t = \sum_{j \in \mathcal{N}^r} \overline{p}^r_{jt} \;\; \forall t \in [t_1:t_2] \Big\}
\end{align*}
is the projection of bus-level uncertainty set $\overline{\mathcal{P}}^{r}$ unto the total available renewable power, and $a^{total}_t (\mb{W}), b(\mb{x},\mb{w},z)$ are properly defined depending on the particular robust constraint. We then replace $\overline{\mathcal{P}}^{r,total}_{[t_1:t_2]}$ by the following OA $\widehat{\mathcal{P}}^{r,total}_{[t_1:t_2]}$.
\begin{align}
& \widehat{\mathcal{P}}^{r,total}_{[t_1:t_2]} = \Big\{ \; \overline{\mb{p}}^{r,total}_{[t_1:t_2]}: \label{eq:OA}\\
& \hspace{2cm} \overline{p}^{r,total,min}_{t} \leq \overline{p}^{r,total}_t \leq \overline{p}^{r,total,max}_{t} \;\; \forall t \in [t_1:t_2] \notag\\
& \hspace{2cm} \underline{\triangle}^{total}_{t} \leq \overline{p}^{r,total}_t - \overline{p}^{r,total}_{t-1} \leq \overline{\triangle}^{total}_{t} \;\; \forall t \in [t_1+1:t_2] \; \Big\},\notag
\end{align}
where
\begin{subequations} \label{eq:MinAndMaxDeltaOfTotalAvailableRenewablePower}
\begin{align}
&\hspace{-2mm} \underline{\triangle}^{total}_{t} =  \min\limits_{\overline{\mb{p}}^r \in \overline{\mathcal{P}}^r} \sum_{j \in \mathcal{N}^r} \left( \overline{p}^r_{jt} - \overline{p}^r_{j,t-1} \right) \; \forall \, t \in [t_1+1:t_2] \\ 
& \hspace{-2mm} \overline{\triangle}^{total}_{t} =  \max\limits_{\overline{\mb{p}}^r \in \overline{\mathcal{P}}^r} \sum_{j \in \mathcal{N}^r} \left( \overline{p}^r_{jt} - \overline{p}^r_{j,t-1} \right) \; \forall \, t \in [t_1+1:t_2]. 
\end{align}
\end{subequations}
Observe that this set is indeed an OA of $\overline{\mathcal{P}}^{r,total}_{[t_1:t_2]}$ (i.e. $\overline{\mathcal{P}}^{r,total}_{[t_1:t_2]} \subset \widehat{\mathcal{P}}^{r,total}_{[t_1:t_2]}$) due to \eqref{eq:MinAndMaxDeltaOfTotalAvailableRenewablePower}, thus ensuring robust feasibility when replacing $\overline{\mathcal{P}}^{r,total}_{[t_1:t_2]}$ in \eqref{eq:TimeCoupledRobustConstraint}. Also observe that the OA \eqref{eq:OA} only involves the $\overline{p}^{r,total}_t$ variables, whereas $\overline{\mathcal{P}}^{r,total}_{[t_1:t_2]}$ has many more additional variables $\mb{u},\mb{v}$ as in \eqref{eq:UncertaintySet}. Thus, solving \eqref{eq:TimeCoupledRobustConstraint} over the OA set becomes much faster than over the original uncertainty set $\overline{\mathcal{P}}^{r,total}_{[t_1:t_2]}$. The proposition below summarizes this result, and the use of duality based approach to reformulate the resulting robust constraint.
\begin{proposition} \label{Proposition:OuterApproximation}
Robust constraint \eqref{eq:TimeCoupledRobustConstraint} is implied by
\begin{align*}
& \max\limits_{\overline{\mb{p}}^{r,total}_{[t_1:t_2]} \in \widehat{\mathcal{P}}^{r,total}_{[t_1:t_2]}} \; \sum_{t=t_1}^{t_2} a^{total}_t (\mb{W}) \; \overline{p}^{r,total}_t \; \leq \; b(\mb{x},\mb{w},z),
\end{align*}
which is equivalent to the existence of vectors $\overline{\mb{\pi}},\underline{\mb{\pi}},\overline{\mb{\phi}},\underline{\mb{\phi}} \geq 0$ such that
\begin{align*}
& \sum_{t=t_1}^{t_2} \left( \overline{p}^{r,total,max}_{t} \, \overline{\pi}_t - \overline{p}^{r,total,min}_{t} \, \underline{\pi}_t \right) +  \sum_{t=t_1+1}^{t_2} \left( \overline{\triangle}^{total}_{t} \, \overline{\phi}_t - \underline{\triangle}^{total}_{t} \, \underline{\phi}_t \right) \;\; \leq \;\; b(\mb{x},\mb{w},z) \\
&  \overline{\pi}_t - \underline{\pi}_t + \overline{\phi}_t - \underline{\phi}_t - \overline{\phi}_{t+1} + \underline{\phi}_{t+1} = a^{total}_t (\mb{W}) \quad \forall t \in [t_1:t_2] \\
&  \overline{\phi}_{t_1} = \underline{\phi}_{t_1} = \overline{\phi}_{t_2+1} + \underline{\phi}_{t_2+1} = 0.
\end{align*}
\end{proposition}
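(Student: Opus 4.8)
The plan is to prove Proposition~\ref{Proposition:OuterApproximation} in two self-contained steps: first, that feasibility of \eqref{eq:TimeCoupledRobustConstraint} is \emph{implied} by the analogous constraint with $\overline{\mathcal{P}}^{r,total}_{[t_1:t_2]}$ replaced by its outer approximation $\widehat{\mathcal{P}}^{r,total}_{[t_1:t_2]}$; and second, that the latter robust constraint is equivalent to the stated finite system of linear inequalities via LP duality. For the first step, I would simply invoke the containment $\overline{\mathcal{P}}^{r,total}_{[t_1:t_2]} \subset \widehat{\mathcal{P}}^{r,total}_{[t_1:t_2]}$, which was already argued in the text just above the proposition using the defining optimization problems \eqref{eq:MinAndMaxTotalAvailableRenewablePower} and \eqref{eq:MinAndMaxDeltaOfTotalAvailableRenewablePower}: any $\overline{\mb{p}}^{r,total}_{[t_1:t_2]}$ arising as the system-level projection of some $\overline{\mb{p}}^r \in \overline{\mathcal{P}}^r$ automatically satisfies the level bounds and the consecutive-difference bounds defining the OA. Hence the maximum of the linear objective over the larger set $\widehat{\mathcal{P}}^{r,total}_{[t_1:t_2]}$ dominates the maximum over $\overline{\mathcal{P}}^{r,total}_{[t_1:t_2]}$, so if the former is $\leq b(\mb{x},\mb{w},z)$ then so is the latter.

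For the second step, the inner problem $\max_{\overline{\mb{p}}^{r,total}_{[t_1:t_2]} \in \widehat{\mathcal{P}}^{r,total}_{[t_1:t_2]}} \sum_{t} a^{total}_t(\mb{W})\, \overline{p}^{r,total}_t$ is a linear program in the variables $\overline{p}^{r,total}_t$, $t \in [t_1:t_2]$, with constraints of two kinds: the box constraints $\overline{p}^{r,total,min}_t \leq \overline{p}^{r,total}_t \leq \overline{p}^{r,total,max}_t$ and the ramp constraints $\underline{\triangle}^{total}_t \leq \overline{p}^{r,total}_t - \overline{p}^{r,total}_{t-1} \leq \overline{\triangle}^{total}_t$ for $t \in [t_1+1:t_2]$. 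I would assign dual multipliers $\overline{\pi}_t, \underline{\pi}_t \geq 0$ to the upper and lower box constraints and $\overline{\phi}_t, \underline{\phi}_t \geq 0$ to the upper and lower ramp constraints, then write the LP dual. The dual objective is $\sum_{t=t_1}^{t_2}(\overline{p}^{r,total,max}_t\, \overline{\pi}_t - \overline{p}^{r,total,min}_t\, \underline{\pi}_t) + \sum_{t=t_1+1}^{t_2}(\overline{\triangle}^{total}_t\, \overline{\phi}_t - \underline{\triangle}^{total}_t\, \underline{\phi}_t)$, and the dual feasibility constraint for variable $\overline{p}^{r,total}_t$ collects the coefficient of $\overline{p}^{r,total}_t$ across the primal constraints: $+1$ from its own box pair (giving $\overline{\pi}_t - \underline{\pi}_t$), $+1$ from the ramp constraint indexed at $t$ (giving $+\overline{\phi}_t - \underline{\phi}_t$), and $-1$ from the ramp constraint indexed at $t+1$ (giving $-\overline{\phi}_{t+1} + \underline{\phi}_{t+1}$), which must equal $a^{total}_t(\mb{W})$. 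The boundary convention $\overline{\phi}_{t_1} = \underline{\phi}_{t_1} = \overline{\phi}_{t_2+1} = \underline{\phi}_{t_2+1} = 0$ handles the endpoint indices where the corresponding ramp constraint does not exist. Strong LP duality (the primal is feasible and bounded because $\widehat{\mathcal{P}}^{r,total}_{[t_1:t_2]}$ is a nonempty compact polytope) then says the robust constraint $\max \leq b(\mb{x},\mb{w},z)$ holds if and only if there exist feasible duals achieving dual objective $\leq b(\mb{x},\mb{w},z)$, which is exactly the stated system.

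The main obstacle is bookkeeping rather than conceptual: one must be careful with signs and index ranges when forming the dual, in particular writing each two-sided constraint as a pair of one-sided inequalities with the correct orientation ($\overline{p}^{r,total}_t \leq \overline{p}^{r,total,max}_t$ and $-\overline{p}^{r,total}_t \leq -\overline{p}^{r,total,min}_t$, etc.) so that the multipliers are all nonnegative and the constant terms in the dual objective come out with the signs shown. A secondary point worth a sentence is to note that the proposition as stated gives only an implication ("is implied by"), not an equivalence, precisely because the OA is a strict relaxation in general; so no claim that the OA reformulation is lossless is needed, and the proof is complete once the containment and the duality argument are in place. I would present the containment step in one line, then devote the bulk of the proof to carefully exhibiting the primal LP, its dual, and invoking strong duality.
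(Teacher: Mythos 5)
Your proposal is correct and follows essentially the same route as the paper's (very terse) proof: the implication comes from the containment $\overline{\mathcal{P}}^{r,total}_{[t_1:t_2]} \subset \widehat{\mathcal{P}}^{r,total}_{[t_1:t_2]}$, and the finite reformulation comes from dualizing the linear program over the outer-approximation polytope, exactly as you describe with the multipliers $\overline{\mb{\pi}},\underline{\mb{\pi}},\overline{\mb{\phi}},\underline{\mb{\phi}}$ and the boundary convention at $t_1$ and $t_2+1$. Your write-up simply makes explicit the dual bookkeeping and the strong-duality justification that the paper leaves implicit.
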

\begin{proof}[Proof of Proposition \ref{Proposition:OuterApproximation}.]
The result follows from $\overline{\mathcal{P}}^{r,total}_{[t_1:t_2]} \subset \widehat{\mathcal{P}}^{r,total}_{[t_1:t_2]}$ and then taking the dual over $\widehat{\mathcal{P}}^{r,total}_{[t_1:t_2]}$.
\end{proof}


Finally, we note that in \cite{LorcaSunLitvinovZheng2016AffineUC} the separability over time periods of a static uncertainty set is exploited to reformulate ramping constraints in a simple way, also exploiting the fact that these constraints only couple two time periods. However, the more general dynamic uncertainty sets \eqref{eq:UncertaintySet} are not separable over time periods, and energy storage constraints couple up to $T$ time periods. Due to this, the technique presented above based on OA and the duality based approach is a critical enhancement that allows efficiently handling energy storage and dynamic uncertainty sets in the multistage robust UC.

See \cite[Chapter 6]{Tuybook} for more discussion on outer approximations.

\subsection{Further Algorithmic Enhancements} \label{Section:SolutionMethod:AlgorithmicEnhancements}

The CG framework is further enhanced by the following techniques.

\subsubsection{One-tree Benders implementation} \label{Section:SolutionMethod:AlgorithmicEnhancements:OneTreeBenders}
The presence of binary variables in the master problem implies that if the constraint generation approach presented in \ref{Section:SolutionMethod:ConstraintGeneration} is directly used, then a potentially large number of mixed-integer programs will have to be solved throughout the algorithm, which may be quite slow. We propose an efficient alternative which is to use the one-tree Benders approach, in which the solver builds only one branch-and-bound tree and adds the generated constraints as the branch-and-bound process unfolds. Solver callbacks are required in this implementation. For example, lazy constraint callbacks \cite{Cplex2009} are required in CPLEX.

Another alternative, discussed in \cite{LorcaSunLitvinovZheng2016AffineUC}, consists of ``fixing and releasing'' binary variables in order to generate multiple cuts to the mixed-integer master problem. However, this still requires potentially building from scratch more than one branch-and-bound tree and is dominated by the more efficient one-tree Benders approach.

\subsubsection{Constraint screening using fast computed upper bounds} \label{Section:SolutionMethod:AlgorithmicEnhancements:UpperBounds}
Each iteration of the CG algorithm needs to solve the following separation problem for each robust constraint $k$, by checking whether
\begin{align}
& \max\limits_{\overline{\mb{p}}^r \in \overline{\mathcal{P}}^r} \; \mb{a}_k (\mb{W})^\top \overline{\mb{p}}^r \; \leq \; b_k(\mb{x},\mb{w},z) \label{eq:Subproblem}
\end{align}
holds for the fixed $z,\mb{x},\mb{w},\mb{W}$. It amounts to solving a linear program over the dynamic uncertainty set \eqref{eq:UncertaintySet} for each $k$. This can be time consuming for large instances. We propose to screen the robust constraints in the following way. In each iteration of the CG master problem, calculate an \emph{upper bound} $ub_k(\mb{W})$ for the left-hand side of \eqref{eq:Subproblem} as
\begin{align}\label{eq:computeUpperBound}
ub_k(\mb{W}) = \max_{\overline{\mb{p}}^r \in \widehat{\mathcal{P}}^r_k} \; \mb{a}_k (\mb{W})^\top \overline{\mb{p}}^r,
\end{align}
where $\widehat{\mathcal{P}}^r_k$ is an outer approximation of $\overline{\mathcal{P}}^r$, so that $\max_{\overline{\mb{p}}^r \in \overline{\mathcal{P}}^r} \; \mb{a}_k (\mb{W})^\top \overline{\mb{p}}^r \; \leq \; ub_k(\mb{W})$. Then, before solving \eqref{eq:Subproblem} for each robust constraint $k$, we check whether $ub_k(\mb{W}) \leq b_k(\mb{w},z)$. If this holds, then \eqref{eq:Subproblem} must also hold, and we do not need to solve a linear program over the uncertainty set. Otherwise, we solve the linear program to check if \eqref{eq:Subproblem} holds or not.

%


In order for this screening process to be efficient, \eqref{eq:computeUpperBound} must be solved very fast. For this purpose, we construct interval type sets $\widehat{\mathcal{P}}^r_k$ for robust transmission constraints, under which upper bounds can be computed by simply checking the sign of the elements in $\mb{W}$. Since typically many robust constraints are far from being violated (e.g., some transmission lines are rarely congested), such robust constraints will be screened very rapidly using this technique.

\subsubsection{Strategy for checking loose constraints} \label{Section:SolutionMethod:AlgorithmicEnhancements:Strategies}
After checking for the feasibility of all robust constraints in the problem, it is possible that several of them are quite far from being violated. These loose constraints are unlikely to become violated in the next iteration. Given this, we restrict the set of robust constraints to those that were violated or close to being violated in the last iteration, and only get back to checking all robust constraints once the master problem has converged. This process can be repeated as needed until the optimal solution has been found and feasibility is ensured for all robust constraints.

\section{Computational Experiments} \label{Section:ComputationalExperiments}
We conduct extensive computational experiments to evaluate the solution method and to understand the benefits of the new model. The experiments are carried out using an adapted version of the 2736-bus Polish system \cite{Zimmerman11MATPOWER}. The system contains 289 generators (28880 MW of total capacity), 60 wind farms (10689 MW installed), 30 solar farms (6299 MW installed), 10 storage units (600 MW of total output capacity), 2011 demand nodes (17831 MW average, 22594 MW peak) and 100 transmission lines. The energy storage capacity of each storage unit corresponds to five hours at full output, their ramping capacities are unconstrained, and their efficiency is 80\% \cite{Denholm2013ValueOfStorage}. We use wind and solar power data from NREL's Western Wind and Solar Integration Datasets \cite{Potter2008CreatingDataset}. 
All the experiments have been implemented using Python 2.7 in a PC with an Intel Core i5 processor at 2.4 GHz with 4GB memory, using CPLEX 12.6 as the MIP solver. Section \ref{Section:ComputationalExperiments:SolutionMethodPerformance} studies the performance of the proposed solution method, including a comparison of solution quality obtained with and without the outer-approximation technique. Section \ref{Section:ComputationalExperiments:Simulation} evaluates the advantages of the proposed approach.


\subsection{Performance of the solution method} \label{Section:ComputationalExperiments:SolutionMethodPerformance}
Here we study how the various techniques presented in section \ref{Section:SolutionMethod} contribute to an effective solution method. In the experiments, we use a horizon of $T=24$ hours with hourly intervals. The $f,g,\mb{A},\mb{B}$ parameters of the dynamic uncertainty set \eqref{eq:UncertaintySet} are estimated using 30 days of the NREL data. The time lag is set as $L=1$, and the dimension of $\mb{v}$ is set as $N_v = 25$, which offers a good balance between uncertainty representability and computational tractability. For the size parameters, we set $\rho = 0.1$ and test various $\Gamma$ values. The norm in \eqref{eq:UncertaintySet:u}-\eqref{eq:UncertaintySet:v} is defined as $\| \cdot \|_{l_1 \cap l_\infty} = \max \left\{ \| \cdot \|_1 / \sqrt{N_v} \; , \;\; \| \cdot \|_\infty \right\}$,
resulting in a polyhedral uncertainty set. An optimality gap of 1\% is used for solving MIPs in the CG algorithm.
\vspace{-2mm}
\begin{table}[!h] 
	\renewcommand{\arraystretch}{1.1}
	\caption{Solution time (hours) for the 2736-bus system} 
	\label{Table:SolutionTimes:2736BusSystem}
	\centering
	\begin{tabular}{ccccccc}
		\toprule
		$\Gamma$       & 0.25 & 0.5  & 1    & 2    & 3    & 4 \\
		\hline
		CG            & T    & T    & T    & T    & T    & T     \\
		CG + OTB           & T    & T    & T    & T    & T    & T     \\
		CG + OTB + OA      & 1.53 & 2.16 & 1.50 & 1.79 & 1.59 & 2.64  \\
		CG + OTB + OA + CS & 0.96 & 2.04 & 1.27 & 1.45 & 1.30 & 0.79  \\
		\bottomrule
	\end{tabular}
\end{table}

Table \ref{Table:SolutionTimes:2736BusSystem} presents the computation time of different combinations of solution techniques presented in Section \ref{Section:SolutionMethod}. In this Table, ``CG'' corresponds to the basic constraint generation algorithm described in Section \ref{Section:SolutionMethod:ConstraintGeneration}, ``OTB'' is the one-tree Benders implementation discussed in Section \ref{Section:SolutionMethod:AlgorithmicEnhancements:OneTreeBenders}, ``OA'' uses the outer-approximation technique in Section \ref{Section:SolutionMethod:OuterApproximation}, and ``CS'' uses constraint screening and the strategy for checking loose constraints in Sections \ref{Section:SolutionMethod:AlgorithmicEnhancements:UpperBounds} and \ref{Section:SolutionMethod:AlgorithmicEnhancements:Strategies}. All the methods incorporate the reformulations in Section \ref{Section:SolutionMethod:SimpleConstraints}, so ``CG+OTB+OA+SC'' is the full solution method presented in Section \ref{Section:SolutionMethod}. ``T'' stands for reaching a time limit of 6 hours.

We can observe that all of the enhancements are important for efficiency. The basic CG has to solve many difficult mixed integer programs, making it very slow for such a large-scale instance. Method ``OTB'' builds only one branch-and-bound tree and generates the constraints in an integrated way, 
however, the number of robust constraints generated through CG is still very large, making the process slow. This is fixed in ``CG+OTB+OA'' by reformulating the robust inter-temporal constraints for worst-case cost, ramping, and storage capacities, in such a way that they are enforced throughout the whole progress of the algorithm with a simple computational representation, leaving the sequential generation of constraints only for transmission. Finally, ``CG+OTB+OA+SC'' further improves the algorithm by reducing the overall number of separation problems \eqref{eq:Subproblem} solved, through quickly recognizing several robust constraints that are not violated.

To further show the effectiveness of outer approximation, we study its tightness. In particular, Table \ref{Table:WorstCaseCost:2736BusSystem} compares the worst-case cost obtained using the outer approximation method in section \ref{Section:SolutionMethod:OuterApproximation} to that obtained without using this technique (with running time longer than 6 hours). We can observe that the loss of solution quality is small, specially for small values of the size parameter $\Gamma$, confirming the value of this technique.

\begin{table}[] 
		\fontsize{7.8pt}{7.8pt}\selectfont
		\setlength{\tabcolsep}{4pt}
		\renewcommand{\arraystretch}{1.3}
		\caption{Worst-case cost for the 2736-bus system with and without outer approximations}
		\label{Table:WorstCaseCost:2736BusSystem}
		\centering
		\begin{tabular}{ccccccc}
			\toprule
			$\Gamma$         & 0.25   & 0.5    & 1      & 2      & 3      & 4      \\
			\hline
			With OA (M\$)    & 11.675 & 11.892 & 12.368 & 13.194 & 13.833 & 14.428 \\
			Without OA (M\$) & 11.675 & 11.881 & 12.344 & 13.115 & 13.738 & 14.324 \\
			Difference       & 0.00\% & 0.09\% & 0.19\% & 0.61\% & 0.69\% & 0.72\% \\
			\bottomrule
		\end{tabular}
\end{table}


Given the complexity of the multistage robust UC with dynamic uncertainty sets, the large-scale 2736-bus instance solved here, and the simple computer where these experiments were carried out, we believe that the solution
method proposed here is very promising for an eventual practical implementation in real-world power systems with a significant adoption of wind and solar power.


\subsection{Comparison to other UC and ED models} \label{Section:ComputationalExperiments:Simulation}
This section studies the performance of three different UC solutions and ED methods on a simulation platform of the dispatch process that mimics the hour to hour operation of the power system. This simulation consists of a rolling-horizon process where, given an on/off schedule for generators (UC solution $\mb{x}$), a dispatch problem is solved for every $t=1,\dots,T$, starting with $t=1$ and moving forward until $t=T$, with uncertain parameters at time $t$ revealed only at that time. That is, when solving a dispatch problem at time $t$ the values of uncertain parameters at future time periods are not known. The dispatch problem solved at time $t$ implements dispatch decisions for that time, and it takes as input the dispatch decisions implemented in the previous time periods. $N=100$ such simulations are carried out, with $T=24$ hours, and then several cost and reliability metrics are examined. The trajectories for wind and solar power are generated using the stochastic model in Eq. \eqref{eq:StochasticModel}, using 30 days of data for parameter estimation. For the 2736-bus system, the $N=100$ simulated trajectories present an average of 5164 MW for available wind power and 1133 MW for available solar power, resulting on an average renewable penetration of 35.3\%.

The following UC and ED models are tested: multistage robust UC with dynamic uncertainty sets using the policy-guided look-ahead ED method proposed in section \ref{Section:Model:EDMethods} (RobUC-Dynamic), multistage robust UC with static uncertainty sets using the policy-enforcement ED method proposed in \cite{LorcaSunLitvinovZheng2016AffineUC} (RobUC-Static), and deterministic UC with reserves using deterministic look-ahead ED (DetUC).

The deterministic UC corresponds to a modification of problem \eqref{eq:FullyAdaptiveModel} in the case where the uncertainty set only contains the forecast trajectory for available renewable power, $\overline{\mathcal{P}}^r = \{ \overline{\mb{p}}^{r,forecast}\}$, thus collapsing the dispatch policy to one dispatch plan rather than a function, $\mb{p}_t(\overline{\mb{p}}^r_{[t]}) = \mb{p}_t$. This model is further enhanced by reserves through replacing eq. \eqref{eq:FullyAdaptiveModel:GeneratorOutputLimits} by
\begin{align*}
& x^o_{it} \, \underline{p}^g_{it} + r^-_{it} \leq p^g_{it} \leq x^o_{it} \, \overline{p}^g_{it} - r^+_{it} \quad \forall i \in \mathcal{N}^g, t \in \mathcal{T},
\end{align*}
and adding constraints
\begin{align*}
& \sum_{i \in \mathcal{N}^g} r^-_{it} \geq R^-_t \, , \quad \sum_{i \in \mathcal{N}^g} r^+_{it} \geq R^+_t \quad \forall t \in \mathcal{T},
\end{align*}
where $r^-_{it}, r^+_{it} \geq 0$ are the down-reserve and up-reserve provided by generator $i$ at time $t$, and $R^-_t, R^+_t$ are the down-reserve and up-reserve requirement levels at time $t$. Notice that the multistage robust UC \eqref{eq:FullyAdaptiveModel} does not need to consider reserve requirements, given that it addresses uncertainty in a direct and systematic way, determining reserves endogenously.

The policy-enforcement ED can be formulated as
\begin{align*}
& \min\limits_{\hat{\mb{p}}_t} \; \left\{ \sum_{i \in \mathcal{N}_g} C^g_i \, \hat{p}^g_{it}: \; \mbox{Eqs. \eqref{eq:PolicyGuidedLAED:pt} and \eqref{eq:PolicyGuidedLAED:GeneratorRobustRamping} hold} \right\},
\end{align*}
and the deterministic look-ahead ED as
\begin{align*}
& \min\limits_{\hat{\mb{p}}_t,\dots,\hat{\mb{p}}_{t+T'}} \; \left\{ \sum_{\tau=t}^{t+T'}\sum_{i \in \mathcal{N}_g} C^g_i \, \hat{p}^g_{i\tau}: \; \mbox{Eqs. \eqref{eq:PolicyGuidedLAED:pt} and \eqref{eq:PolicyGuidedLAED:ptau} hold} \right\}.
\end{align*}
With this we can see that the policy-guided look-ahead ED \eqref{eq:PolicyGuidedLAED} generalizes both the above EDs by utilizing the affine policies obtained from the multistage robust UC in the robust ramping constraints \eqref{eq:PolicyGuidedLAED:GeneratorRobustRamping} and enforcing storage levels constraints \eqref{eq:PolicyGuidedLAED:StorageCapacityDeterminedByPolicy} in a multi-period look-ahead framework, which are both very important to leverage the benefits of energy storage resources. Here, we use $T'=3$ look-ahead periods.

For the two robust UC models, the size of the uncertainty sets is parameterized by $\Gamma$, and for the deterministic UC, the reserve requirement levels $R^-_t, R^+_t$ are selected as $R^-_t = R^+_t \; = \; \Gamma \; \sigma^{TNL}_t$, where $\sigma^{TNL}_t$ is the standard deviation of total net load (namely, total demand minus total available renewable power) at time $t$, under the simulated trajectories \cite{BlackStrbac2007StdDevForReserve}. To properly study the performance of all these methods, ED problems are extended with penalty variables for violations of energy balance and transmission line capacity limits, each with a unit cost of \$5000/MWh.

The simulation results for the 2736-bus system are presented in Table \ref{Table:SimulationResults:2736BusSystem}, where ``Cost Avg'' is the average of total cost over the $N=100$ simulations, ``Cost Std'' is the standard deviation of total cost, ``Cost CVaR'' is the conditional value at risk of total cost at a 10\% level (that is, the average total cost of the 10 highest total costs, given $N=100$), ``Penalty Cost Avg'' is the average penalty cost, ``Penalty Freq'' is the proportion of time periods where penalty occurred, ``Renewables Util'', utilization of renewables, is the proportion of used renewable power with respect to available renewable power, and ``Stored Avg'' is the average level of stored energy.


\begin{table}[] 
  \fontsize{7.8pt}{7.8pt}\selectfont
  \setlength{\tabcolsep}{3.5pt}
  \renewcommand{\arraystretch}{1.3}
  \caption{Simulation results for Polish 2736-bus system}
  \label{Table:SimulationResults:2736BusSystem}
  \centering
  \begin{tabular}{ccccccc}
  \toprule
  \multicolumn{7}{c}{Multistage robust UC with dynamic uncertainty set} \\
  \multicolumn{7}{c}{using policy-guided look-ahead ED (RobUC-Dynamic)} \\
    \hline
    $\Gamma$          & 0.25   & 0.5    & 1      & 2      & 3      & 4       \\
    \hline
    Cost Avg (M\$)    & 12.089 & 11.459 & 11.567 & 11.729 & 11.865 & 12.017  \\
    Cost Std (M\$)    & 1.991  & 0.262  & 0.189  & 0.199  & 0.202  & 0.200   \\
    Cost CVaR (M\$)   & 17.343 & 12.000 & 11.907 & 12.086 & 12.228 & 12.377  \\
    Penalty Cost Avg (\$) & 29424  & 884    & 0      & 0      & 0      & 0       \\
    Penalty Freq      & 4.67\% & 0.29\% & 0.00\% & 0.00\% & 0.00\% & 0.00\%  \\
    Renewables Util    & 99.2\% & 99.1\% & 99.1\% & 98.6\% & 97.6\% & 96.5\% \\
    Stored Avg (MWh)  & 613    & 689    & 726    & 709    & 847    & 1080    \\
    \hline
  \end{tabular}
  \begin{tabular}{ccccccccc}
  \multicolumn{7}{c}{} \\
  \multicolumn{7}{c}{Multistage robust UC with static uncertainty set} \\
  \multicolumn{7}{c}{using policy-enforcement ED (RobUC-Static)} \\
    \hline
    $\Gamma$          & 0.25   & 0.5    & 1      & 2      & 3      & 4       \\
    \hline
    Cost Avg (M\$)    & 24.844  & 18.215  & 13.676 & 11.671 & 11.639 & 11.765 \\
    Cost Std (M\$)    & 11.654  & 7.998   & 3.851  & 0.608  & 0.209  & 0.213  \\
    Cost CVaR (M\$)   & 49.390  & 36.601  & 23.397 & 13.037 & 12.023 & 12.156 \\
    Penalty Cost Avg (\$) & 563688  & 285140  & 93386  & 4696   & 0      & 0      \\
    Penalty Freq      & 31.63\% & 19.83\% & 8.63\% & 0.58\% & 0.00\% & 0.00\% \\
    Renewables Util    & 98.6\% & 98.6\%  & 98.3\% & 97.5\% & 96.7\% & 96.1\% \\
    Stored Avg (MWh)  & 26      & 46      & 88     & 101    & 129    & 151    \\
    \hline
  \end{tabular}
  \begin{tabular}{ccccccccc}
  \multicolumn{7}{c}{} \\
  \multicolumn{7}{c}{Deterministic UC with reserve using deterministic look-ahead ED (DetUC)} \\
    \hline
    $\Gamma$          & 0.25   & 0.5    & 1      & 2      & 3      & 4       \\
    \hline
    Cost Avg (M\$)        & 18.405  & 20.878  & 16.687  & 16.485  & 17.018  & 12.520 \\
    Cost Std (M\$)        & 6.935   & 7.446   & 5.605   & 5.363   & 5.805   & 2.260  \\
    Cost CVaR (M\$)       & 34.362  & 36.326  & 30.015  & 29.058  & 29.739  & 18.458 \\
    Penalty Cost Avg (\$) & 295995  & 400152  & 223927  & 215491  & 237765  & 48136  \\
    Penalty Freq          & 37.13\% & 32.83\% & 32.33\% & 32.08\% & 27.92\% & 9.63\% \\
    Renewables Util       & 98.8\%  & 99.0\%  & 98.8\%  & 98.8\%  & 98.9\%  & 99.0\% \\
    Stored Avg (MWh)      & 384     & 380     & 381     & 381     & 369     & 377    \\
  \bottomrule
  \end{tabular}
\end{table}


\subsubsection{Robust UC v.s. Deterministic UC}
\underline{First}, for both robust and deterministic UC models, the trade-off between operational cost and system reliability (cost std, penalty freq, CVaR) is controlled by the uncertainty set size parameter $\Gamma$ and the reserve level parameter, respectively. Higher $\Gamma$ or reserve level improves system reliability but may increase cost. \underline{Second}, the robust UC models significantly improve both the operational cost and reliability over DetUC. In particular, comparing to the best economic performance of DetUC ($\Gamma=4$), the RobUC-Dynamic model at $\Gamma=1$ achieves a decrease of $7.62\%$ in average cost, $91.64\%$ in cost std, $35.49\%$ in CVaR, and at the same time completely eliminates penalty; the RobUC-Static model at $\Gamma=3$ achieves a decrease of $7.04\%$ in average cost, $90.77\%$ in cost std, and $34.87\%$ in CVaR, and also eliminates penalty. Notice that, at its best performance ($\Gamma=4$), DetUC still has substantial penalty cost. See Figure \ref{Figure:BarPlots} for a graphical representation of this comparison. \underline{Third}, the robust models curtail renewables slightly more than the DetUC model to achieve significantly improved system reliability.

\begin{figure}[!t]
\centering
\includegraphics[scale=0.30]{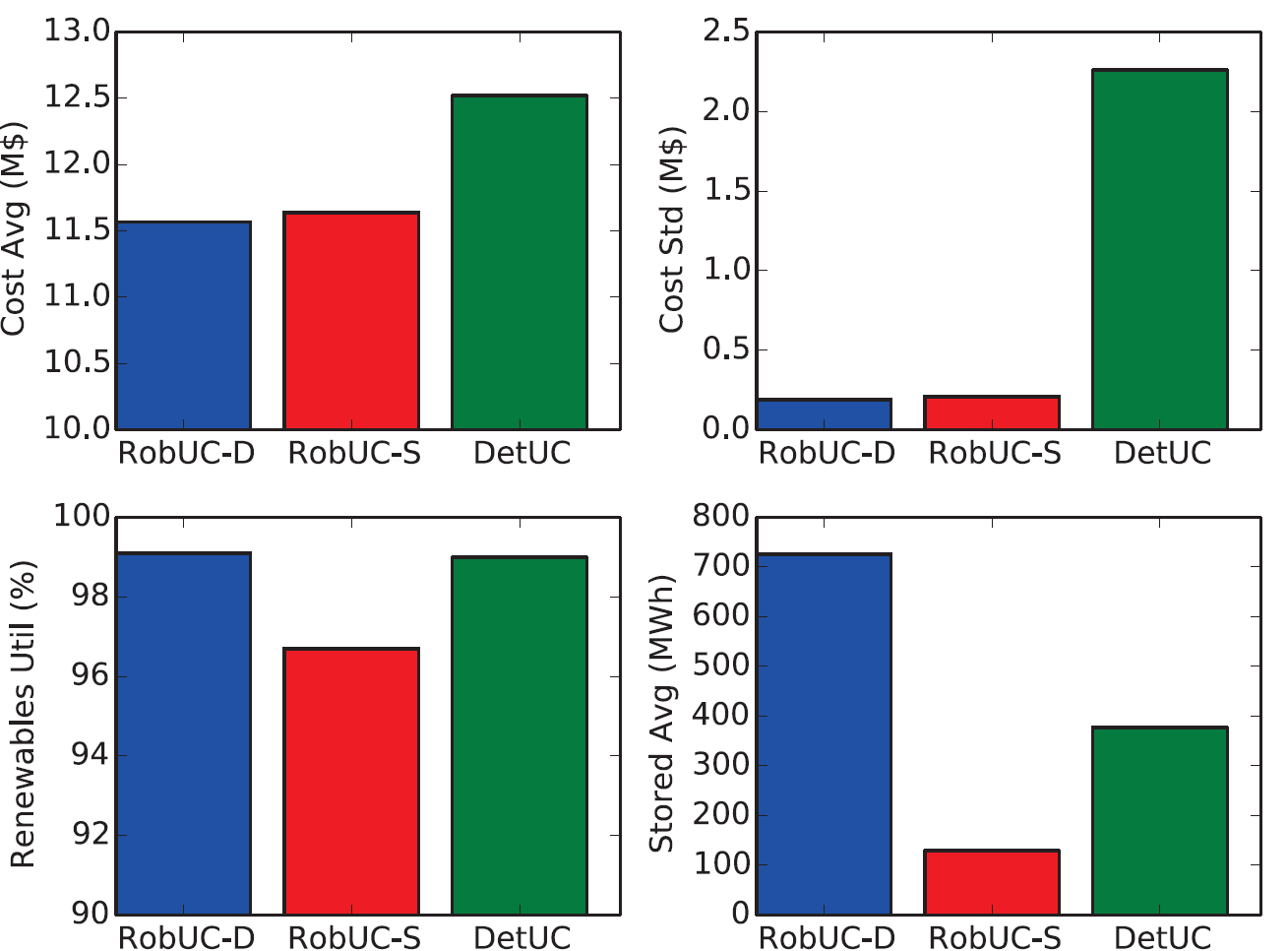}
\caption{Performance measures for RobUC-Dynamic at $\Gamma=1$, RobUC-Static at $\Gamma=3$ and DetUC at $\Gamma=4$}
\label{Figure:BarPlots}
\end{figure}


\subsubsection{RobUC-Dynamic v.s. RobUC-Static}
Robust UC with dynamic uncertainty sets (RobUC-Dynamic) further improves over robust UC with static uncertainty sets (RobUC-Static). \underline{First}, RobUC-Dynamic at $\Gamma=1$ achieves better performance in all three categories: $0.62\%$ lower cost avg, $9.57\%$ lower cost std, and $0.96\%$ lower CVaR than RobUC-Static at its best performance $\Gamma=3$. In other words, robust UC with dynamic uncertainty sets \emph{dominates} the performance of static uncertainty sets. \underline{Second}, we can also observe that the average level of storage utilization for RobUC-Dynamic is much higher than that of RobUC-Static. This difference is mainly driven by the different ED models used. In particular, the policy guided look-ahead ED in RobUC-Dynamic is more effective at deploying energy storage devices than the policy-enforcement ED by following the multistage policy for storage decisions. \underline{Third}, RobUC-Dynamic utilizes more renewable power than RobUC-Static for all levels of $\Gamma$. This is due to the fact that the dynamic uncertainty set is more realistic and less conservative than the static uncertainty set.



In summary, robust UC models dominate the deterministic UC model in all operational cost and system reliability metrics. Moreover, the multistage robust UC model with dynamic uncertainty sets and policy-guided look-ahead ED dominates RobUC-Static. RobUC-Dynamic also exhibits higher utilization of storage devices and reduces renewable curtailment.

\section{Conclusion} \label{Section:Conclusion}
We present a multistage robust UC model with dynamic uncertainty sets for power systems with significant wind and solar power and storage units. We also propose a novel dispatch process to accompany the robust UC model. An efficient solution framework based on constraint generation and duality reformulations, with several algorithmic improvements, is developed. With extensive computational experiments, we show that the proposed algorithm can solve large-scale multistage robust UC models with high dimensional uncertainty in a time budget suitable for the day-ahead operation. The proposed robust UC model with the novel ED method is shown to dominate the deterministic UC with reserve and look-ahead ED in both operational cost and system reliability. The proposed dynamic uncertainty sets also effectively capture the temporal and spatial correlations of wind and solar power, which is important for further improving the performance of the multistage robust UC model. The new ED method leads to more utilization of storage units and less curtailment of renewable power.


In summary, the proposed multistage robust UC model, the dynamic uncertainty sets, the policy-guided look-ahead ED, and the solution methodology significantly improve over the existing deterministic and multistage robust UC models and solution methods, and provide a novel and effective approach for operating large-scale power systems with a large number of wind and solar farms and storage devices. Finally, a challenging topic for future work is to incorporate security constraints into the multistage robust UC framework.



\bibliographystyle{IEEEtran} 
\bibliography{RobUCDUSBib}

\end{document}